\newtheorem{thm}{Theorem}[section]
\newtheorem{defn}[thm]{Definition}
\numberwithin{equation}{section}
\begin{document}

\title{\bf Algebraic Schouten Solitons of Three-Dimensional Lorentzian Lie Groups}
\author{Siyao Liu \hskip 0.4 true cm  Yong Wang$^{*}$}

\thanks{{\scriptsize
\hskip -0.4 true cm \textit{2010 Mathematics Subject Classification:}
53C40; 53C42.
\newline \textit{Key words and phrases:} Levi-Civita connections; Canonical connections; Kobayashi-Nomizu connections; algebraic Schouten solitons; three-dimensional Lorentzian Lie groups.
\newline \textit{$^{*}$Corresponding author}}}

\maketitle

\begin{abstract}
In \cite{Wears}, Wears defined and studied algebraic T-solitons. In this paper, we give the definition of algebraic Schouten solitons as a special T-soliton and classify algebraic Schouten solitons associated to Levi-Civita connections, canonical connections and Kobayashi-Nomizu connections on three-dimensional Lorentzian Lie groups with some product structure.
\end{abstract}

\vskip 0.2 true cm

%------------------------------------------------------------------------------------%

\pagestyle{myheadings}
\markboth{\rightline {\scriptsize Liu}}
         {\leftline{\scriptsize Algebraic Schouten Solitons of Three-Dimensional Lorentzian Lie Groups}}

\bigskip
\bigskip

%------------------------------------------------------------------------------------%
%------------------------------------------------------------------------------------%

\section{ Introduction}
\indent
Lauret introduced the Ricci soliton, which is a natural generalization of Einstein metric on nilpotent Lie groups, and introduced the algebraic Ricci soliton in Riemannian case in \cite{Lauret}.
Also, Lauret proved that algebraic Ricci solitons on homogeneous Riemannian manifolds are Ricci solitons.
The concept of the algebraic Ricci soliton was extended to the pseudo-Riemannian case by Onda in \cite{Onda}.
The author provided a study of algebraic Ricci solitons in the pseudo-Riemannian case and obtained a steady algebraic Ricci soliton and a shrinking algebraic Ricci soliton in the Lorentzian setting.
Note that in \cite{Batat}, Batat and Onda studied algebraic Ricci solitons of three-dimensional Lorentzian Lie groups, and they determined all three-dimensional Lorentzian Lie groups which are algebraic Ricci solitons.
Etayo and Santamaria studied some affine connections on manifolds with the product structure or the complex structure.
In particular, the canonical connection and the Kobayashi-Nomizu connection for a product structure were studied.
See \cite{Etayo} for details.
Wang defined algebraic Ricci solitons associated to canonical connections and Kobayashi-Nomizu connections in \cite{Wang1}.
And he classified algebraic Ricci solitons associated to canonical connections and Kobayashi-Nomizu connections on three-dimensional Lorentzian Lie groups with the product structure.
Other results of Ricci solitons are found in \cite{Brozos,Azami,Wang2,Wang3}.

Following the work of Lauret on the Ricci flow on nilpotent Lie groups, Wears defined algebraic T-solitons for the geometric evolution equation and established the relationship between algebraic T-solitons and T-solitons. In \cite{Wears}, the author showed that Lauret's ideas as for algebraic solitons apply equally well to an arbitrary geometric evolution equation (subjection to the appropriate conditions) for a left-invariant Riemannian metric on a simply connected Lie group.
In Eq.(1) \cite{Azami}, an generalized Ricci soliton was defined which could be considered as the Schouten soliton.

According to the generalization of the definition of the Schouten tensor in \cite{Louzao}, motivated by \cite{Azami,Wears}, we give the definition of algebraic Schouten solitons, which are Schouten solitons defined in \cite{Azami}.
In this paper, we investigate algebraic Schouten solitons associated to Levi-Civita connections, canonical connections and Kobayashi-Nomizu connections, and classify algebraic Schouten solitons associated to Levi-Civita connections, canonical connections and Kobayashi-Nomizu connections on three-dimensional Lorentzian Lie groups.

A brief description of the organization of this paper is as follows.
In Section 2, we recall the classification of three-dimensional Lorentzian Lie groups.
In Section 3, we classify algebraic Schouten solitons associated to Levi-Civita connections on three-dimensional Lorentzian Lie groups with the product structure.
In Section 4, we classify algebraic Schouten solitons associated to canonical connections and Kobayashi-Nomizu connections on three-dimensional Lorentzian Lie groups with the product structure.

%------------------------------------------------------------------------------------%

\vskip 1 true cm
\section{ Three-dimensional unimodular Lorentzian Lie groups}

See \cite{Milnor}, Milnor gave a complete classification of three-dimensional unimodular Lie groups equipped with a left-invariant Riemannian metric.
In \cite{Rahmani}, Rahmani classified three-dimensional unimodular Lie groups equipped with a left-invariant Lorentzian metric.
Cordero and Parker wrote down the possible forms of a non-unimodular Lie algebra in \cite{Cordero}, which was proved by Calvaruso in \cite{Calvaruso}.
Three-dimensional Lorentzian Lie groups have been classified by the following theorems.

\begin{thm}
Let $(G, g)$ be a three-dimensional connected unimodular Lie group, equipped with a left-invariant Lorentzian metric. Then there exists a pseudo-orthonormal basis $\{e_{1}, e_{2}, e_{3}\}$ with $e_{3}$ timelike such that the Lie algebra of $G$ is one of the following:
\begin{align}
\notag
(\mathfrak{g}_{1}):\\ \notag
&[e_{1}, e_{2}]=\alpha e_{1}-\beta e_{3},~~[e_{1}, e_{3}]=-\alpha e_{1}-\beta e_{2},~~[e_{2}, e_{3}]=\beta e_{1}+\alpha e_{2}+\alpha e_{3},~~\alpha\neq 0.\\ \notag
(\mathfrak{g}_{2}):\\ \notag
&[e_{1}, e_{2}]=\gamma e_{2}-\beta e_{3},~~[e_{1}, e_{3}]=-\beta e_{2}-\gamma e_{3},~~[e_{2}, e_{3}]=\alpha e_{1},~~\gamma\neq 0.\\ \notag
(\mathfrak{g}_{3}):\\ \notag
&[e_{1}, e_{2}]=-\gamma e_{3},~~[e_{1}, e_{3}]=-\beta e_{2},~~[e_{2}, e_{3}]=\alpha e_1.\\ \notag
(\mathfrak{g}_{4}):\\ \notag
&[e_{1}, e_{2}]=-e_{2}+(2\eta-\beta)e_{3},~~\eta=1~{\rm or}-1,~~[e_{1}, e_{3}]=-\beta e_{2}+ e_{3},~~[e_{2}, e_{3}]=\alpha e_{1}. \notag
\end{align}
\end{thm}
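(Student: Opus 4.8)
The plan is to follow Milnor's unimodular-frame technique, adapted to Lorentzian signature. Fix the Lorentzian inner product $g$ on the Lie algebra $\mathfrak{g}$ together with the induced volume form, which determines a cross product $\times$ by $g(X\times Y, Z)=\mathrm{vol}(X,Y,Z)$. Since the bracket is an alternating bilinear map $\mathfrak{g}\times\mathfrak{g}\to\mathfrak{g}$ and $\times$ identifies $\Lambda^2\mathfrak{g}$ with $\mathfrak{g}$, there is a unique endomorphism $L\colon\mathfrak{g}\to\mathfrak{g}$ with $[X,Y]=L(X\times Y)$ for all $X,Y$. First I would establish the Lorentzian analogue of Milnor's lemma: $G$ is unimodular if and only if $L$ is self-adjoint with respect to $g$, i.e. $g(LX,Y)=g(X,LY)$. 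This follows by expanding $\mathrm{tr}(\mathrm{ad}_X)$ through $L$ and the cross product and checking that it vanishes for every $X$ exactly when the skew part of $L$ is zero.

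The heart of the argument is then a normal-form classification of the self-adjoint operator $L$. In Riemannian signature the spectral theorem yields a $g$-orthonormal eigenbasis and essentially one case; the difficulty here is that a $g$-self-adjoint operator on a Lorentzian $3$-space need not be diagonalizable. I would invoke the classification of self-adjoint endomorphisms with respect to an indefinite inner product, which sorts $L$ by its Segre (Jordan) type into four canonical forms: (i) diagonalizable over $\mathbb{R}$ with a pseudo-orthonormal eigenbasis; (ii) a single nontrivial Jordan block supported on a null direction with a repeated real eigenvalue; (iii) a larger Jordan block involving the null cone; and (iv) a complex-conjugate pair of eigenvalues, forcing a rotation-type block in a timelike plane. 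Selecting for each type the adapted pseudo-orthonormal basis $\{e_{1},e_{2},e_{3}\}$ with $e_{3}$ timelike that realizes the canonical form produces four matrices for $L$.

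To finish, I would substitute each canonical $L$ back into $[X,Y]=L(X\times Y)$, compute the brackets $[e_i,e_j]$, and read off the structure constants; relabeling and rescaling the frame matches them to the four displayed families $(\mathfrak{g}_{1})$ through $(\mathfrak{g}_{4})$, with the side conditions ($\alpha\neq 0$ in $(\mathfrak{g}_{1})$, $\gamma\neq 0$ in $(\mathfrak{g}_{2})$, $\eta=\pm 1$ in $(\mathfrak{g}_{4})$) recording the nondegeneracy and normalization constraints surviving from each type. Because $L$ arises from a genuine Lie bracket, the Jacobi identity holds automatically, so no extra consistency check is needed in this direction. I expect the main obstacle to be case (iv) and the null-supported Jordan cases (ii)--(iii): the absence of a diagonalizing orthonormal frame forces the pseudo-orthonormal basis to be built carefully around the null eigenvectors, and checking that the resulting families are genuinely inequivalent, rather than related by an orthochronous Lorentz transformation, is the delicate point.
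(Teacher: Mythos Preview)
Your outline is the standard Milnor--Rahmani argument and is correct in spirit: encode the bracket through a $g$-self-adjoint endomorphism $L$ via the Lorentzian cross product, classify $L$ by Segre type, and read off the four bracket families from the resulting canonical forms. That is exactly how the result is obtained in the literature.

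However, the paper itself does not prove this statement at all. Theorem~2.1 is quoted as background, with the classification attributed to Rahmani (and Milnor for the Riemannian prototype); the surrounding text in Section~2 simply cites \cite{Milnor}, \cite{Rahmani}, \cite{Cordero}, \cite{Calvaruso} and then states the theorem without argument. So there is nothing in the paper to compare your proposal against beyond noting that your sketch follows the same route as the cited source. If you intend to include a proof, what you have is the right skeleton; the genuinely nontrivial step, as you anticipate, is the Lorentzian normal-form classification of self-adjoint $L$ (four Segre types rather than one), and you should expect the correspondence between Segre types and the families $(\mathfrak{g}_1)$--$(\mathfrak{g}_4)$ to be: complex eigenvalues $\leftrightarrow(\mathfrak{g}_1)$, $3\times 3$ null Jordan block $\leftrightarrow(\mathfrak{g}_2)$, diagonalizable $\leftrightarrow(\mathfrak{g}_3)$, $2\times 2$ null Jordan block $\leftrightarrow(\mathfrak{g}_4)$.
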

\begin{thm}

Let $(G, g)$ be a three-dimensional connected non-unimodular Lie group, equipped with a left-invariant Lorentzian metric. Then there exists a pseudo-orthonormal basis $\{e_{1}, e_{2}, e_{3}\}$ with $e_{3}$ timelike such that the Lie algebra of $G$ is one of the following:
\begin{align}
\notag
(\mathfrak{g}_{5}):\\ \notag
&[e_{1}, e_{2}]=0,~~[e_{1}, e_{3}]=\alpha e_{1}+\beta e_{2},~~[e_{2}, e_{3}]=\gamma e_{1}+\delta e_{2},~~\alpha+\delta\neq 0,~~\alpha\gamma+\beta\delta=0.\\ \notag
(\mathfrak{g}_{6}):\\ \notag
&[e_{1}, e_{2}]=\alpha e_{2}+\beta e_{3},~~[e_{1}, e_{3}]=\gamma e_{2}+\delta e_{3},~~[e_{2}, e_{3}]=0,~~\alpha+\delta\neq 0,~~\alpha\gamma-\beta\delta=0.\\ \notag
(\mathfrak{g}_{7}):\\ \notag
&[e_{1}, e_{2}]=-\alpha e_{1}-\beta e_{2}-\beta e_{3},~~[e_{1}, e_{3}]=\alpha e_{1}+\beta e_2+\beta e_3,~~[e_{2}, e_{3}]=\gamma e_{1}+\delta e_{2}+\delta e_{3},\\ \notag
&~~\alpha+\delta\neq 0,~~\alpha\gamma=0. \notag
\end{align}
\end{thm}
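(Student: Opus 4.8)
The plan is to reduce an arbitrary three-dimensional non-unimodular Lie algebra $\mathfrak{g}$ carrying a left-invariant Lorentzian inner product to one of the normal forms $(\mathfrak{g}_{5})$--$(\mathfrak{g}_{7})$ by exploiting the \emph{unimodular kernel}. First I would introduce the linear functional $X \mapsto \operatorname{tr}(\operatorname{ad}_X)$ on $\mathfrak{g}$. Since $\operatorname{tr}(\operatorname{ad}_{[X,Y]}) = \operatorname{tr}([\operatorname{ad}_X,\operatorname{ad}_Y]) = 0$, the commutator ideal $[\mathfrak{g},\mathfrak{g}]$ lies in the kernel $\mathfrak{u} := \{X : \operatorname{tr}(\operatorname{ad}_X) = 0\}$, so $[\mathfrak{u},\mathfrak{g}] \subseteq [\mathfrak{g},\mathfrak{g}] \subseteq \mathfrak{u}$ and $\mathfrak{u}$ is an ideal. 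Non-unimodularity means precisely that this functional is nonzero, hence $\dim \mathfrak{u} = 2$, and the bracket is completely determined by the operator $L := \operatorname{ad}_{w}|_{\mathfrak{u}}$, where $w$ spans a complement of $\mathfrak{u}$, together with the bracket on $\mathfrak{u}$ itself (and $[w,w]=0$).

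The key structural step, and the feature absent in Milnor's Riemannian treatment \cite{Milnor}, is to split the argument according to the causal character of the plane $\mathfrak{u}$ with respect to the metric. A $2$-plane in a three-dimensional Lorentzian inner product space is of exactly one of three types: spacelike (induced form positive definite), timelike (induced form of signature $(1,1)$), or degenerate (induced form with a one-dimensional radical spanned by a null vector). These three possibilities produce the three families: a spacelike $\mathfrak{u}$ with $e_{3}$ timelike and $\{e_{1},e_{2}\}$ spacelike gives $(\mathfrak{g}_{5})$; a pseudo-orthonormal adapted basis of a Lorentzian $\mathfrak{u}$ gives $(\mathfrak{g}_{6})$; and a null-adapted basis of a degenerate $\mathfrak{u}$, in which $e_{2}+e_{3}$ is the null generator of the radical, gives $(\mathfrak{g}_{7})$. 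In each case I would write $L$ in the chosen basis as $\begin{pmatrix}\alpha & \gamma\\ \beta & \delta\end{pmatrix}$ and read off that the condition $\alpha+\delta\neq 0$ is equivalent to $\operatorname{tr}(\operatorname{ad}_{w})\neq 0$, i.e.\ to non-unimodularity.

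The remaining work is to remove the residual freedom in the basis, namely the stabiliser in $O(1,2)$ of the decomposition and of the causal type of $\mathfrak{u}$. In the spacelike case this residual group is the rotation group of $\mathfrak{u}$, and diagonalising the Gram matrix $L^{\top}L$ of the columns of $L$ imposes the orthogonality relation $\alpha\gamma+\beta\delta = 0$ of $(\mathfrak{g}_{5})$; in the Lorentzian case the same normalisation performed with the indefinite inner product of $\mathfrak{u}$ yields the sign-changed relation $\alpha\gamma-\beta\delta=0$ of $(\mathfrak{g}_{6})$. One should also confirm that $\mathfrak{u}$ may be taken abelian, so that $\mathfrak{g}=\mathbb{R}w\ltimes\mathfrak{u}$ is a semidirect product; the Jacobi identity is then automatic for \emph{any} $L$, which shows that these quadratic relations are genuine normalisations rather than integrability constraints.

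The degenerate case $(\mathfrak{g}_{7})$ is where I expect the real difficulty. When $\mathfrak{u}$ is degenerate there is no pseudo-orthonormal basis internal to $\mathfrak{u}$, so one must work with a null basis, and the residual symmetry is the stabiliser of the null line, a non-reductive parabolic subgroup of $O(1,2)$ containing null rotations (unipotent elements). Bringing $L$ to normal form under this smaller, non-reductive group is more delicate than the diagonalisation available in the other two cases, and it is precisely this reduction that forces the entangled shape of the $(\mathfrak{g}_{7})$ brackets and the surviving relation $\alpha\gamma=0$. Finally, to complete the classification I would verify exhaustiveness, that every non-unimodular example falls into one of the three causal types and hence into the list, and check that the displayed constraints together with the quadratic relations are exactly what non-unimodularity and the normal-form reduction impose, with nothing identified redundantly.
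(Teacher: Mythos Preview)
The paper does not actually prove this statement: Theorem~2.2 is quoted as background, with the sentence preceding it attributing the result to Cordero--Parker \cite{Cordero} and Calvaruso \cite{Calvaruso}. There is therefore no ``paper's own proof'' to compare against. Your outline, however, is precisely the standard argument those references carry out: isolate the two-dimensional unimodular kernel $\mathfrak{u}=\ker(\operatorname{tr}\circ\operatorname{ad})$, show it is an abelian ideal, split into three cases according to whether the induced metric on $\mathfrak{u}$ is Riemannian, Lorentzian, or degenerate, and in each case use the residual isometry group preserving the splitting to normalise the matrix of $\operatorname{ad}_{w}|_{\mathfrak{u}}$. Your identification of the three causal types with $(\mathfrak{g}_{5})$, $(\mathfrak{g}_{6})$, $(\mathfrak{g}_{7})$ and of the quadratic relations $\alpha\gamma+\beta\delta=0$, $\alpha\gamma-\beta\delta=0$, $\alpha\gamma=0$ as normal-form conditions (not Jacobi constraints) is correct. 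One small sharpening: the abelianness of $\mathfrak{u}$ is not an extra hypothesis to ``confirm'' but is forced---if $\mathfrak{u}$ were the non-abelian two-dimensional algebra with $[u_{1},u_{2}]=u_{1}$, then $\operatorname{tr}(\operatorname{ad}_{u_{2}})=-1$, contradicting $u_{2}\in\mathfrak{u}$. With that in hand your sketch is complete in outline; only the explicit null-rotation reduction in the degenerate case remains to be written out, and you have correctly flagged that as the one genuinely nontrivial computation.
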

\vskip 1 true cm

\section{ Algebraic Schouten Solitons associated to Levi-Civita connections on three-dimensional Lorentzian Lie groups}

Throughout this paper, we shall by $\{G_{i}\}_{i=1,\cdots,7}$, denote the connected, simply connected three-dimensional Lie group equipped with a left-invariant Lorentzian metric $g$ and having Lie algebra $\{\mathfrak{g}\}_{i=1,\cdots,7}$.
Let $\nabla$ be the Levi-Civita connection of $G_{i}$ and $R$ its curvature tensor, taken with the convention
\begin{equation}
R(X, Y)Z=\nabla_{X}\nabla_{Y}Z-\nabla_{Y}\nabla_{X}Z-\nabla_{[X, Y]}Z.
\end{equation}
The Ricci tensor of $(G_{i}, g)$ is defined by
\begin{equation}\rho(X, Y)=-g(R(X, e_{1})Y, e_{1})-g(R(X, e_{2})Y, e_{2})+g(R(X, e_{3})Y, e_{3}),
\end{equation}
where $\{e_{1}, e_{2}, e_{3}\}$ is a pseudo-orthonormal basis, with $e_{3}$ timelike and the Ricci operator Ric is given by
\begin{equation}
\rho(X, Y)=g({\rm Ric}(X), Y).
\end{equation}
The Schouten tensor is defined by
\begin{equation}
S(e_{i}, e_{j})=\rho(e_{i}, e_{j})-\frac{s}{4} g(e_{i}, e_{j}),
\end{equation}
where $s$ denotes the scalar curvature.
Generalized the definition of the Schouten tensor, we have
\begin{equation}
S(e_{i}, e_{j})=\rho(e_{i}, e_{j})-s \lambda_{0} g(e_{i}, e_{j}),
\end{equation}
where $\lambda_{0}$ is a real number.
Refer to \cite{Salimi}, we can get
\begin{equation}
s=\rho(e_{1}, e_{1})+\rho(e_{2}, e_{2})-\rho(e_{3}, e_{3}).
\end{equation}
\begin{defn}
$(G_{i}, g)$ is called the algebraic Schouten soliton associated to the connection $\nabla$ if it satisfies
\begin{equation}
{\rm Ric}=(s \lambda_{0}+c){\rm Id}+D,
\end{equation}
where $c$ is a real number, and $D$ is a derivation of $\mathfrak{g},$ that is
\begin{equation}
D[X, Y]=[DX, Y]+[X, DY]~~{\rm for }~ X, Y\in \mathfrak{g}.
\end{equation}
\end{defn}

\begin{thm}
$(G_{1}, g)$ is the algebraic Schouten soliton associated to the connection $\nabla$ if and only if $\beta=0,$ $c=0.$
\end{thm}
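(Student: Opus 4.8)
The plan is to reduce the soliton equation to a system of polynomial equations in $\alpha,\beta,\lambda_0,c$ and analyze it directly. First I would compute the Levi-Civita connection of $(G_1,g)$ from the Koszul formula
\begin{equation}
2g(\nabla_{e_i}e_j, e_k) = g([e_i,e_j],e_k) - g([e_j,e_k],e_i) + g([e_k,e_i],e_j),
\end{equation}
using the signature $g(e_1,e_1)=g(e_2,e_2)=1$, $g(e_3,e_3)=-1$ together with the bracket relations of $\mathfrak{g}_1$. This produces explicit expressions for each $\nabla_{e_i}e_j$ as a combination of the $e_k$ (for instance $\nabla_{e_1}e_1=-\alpha e_2-\alpha e_3$), from which the curvature tensor $R(e_i,e_j)e_k$ follows via the convention (3.1).

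Next I would assemble the Ricci operator $\mathrm{Ric}$ through (3.2)--(3.3). Since the basis $\{e_1,e_2,e_3\}$ does not diagonalize the adjoint action of $\mathfrak{g}_1$, I expect $\mathrm{Ric}$ to carry off-diagonal entries that are multiples of $\beta$, while the diagonal entries feed into (3.6) to give the scalar curvature $s$. At this point the soliton condition (3.7) rearranges to $D = \mathrm{Ric} - (s\lambda_0 + c)\,\mathrm{Id}$, so $D$ is completely determined as a matrix in $\{e_1,e_2,e_3\}$, and the whole problem collapses to the single question of for which parameters this particular $D$ is a derivation of $\mathfrak{g}_1$.

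The main work, and the main obstacle, is imposing the derivation identity (3.8) on the three generating brackets $[e_1,e_2]$, $[e_1,e_3]$, $[e_2,e_3]$. Writing $D=(d_{ij})$ and expanding $D[e_i,e_j]-[De_i,e_j]-[e_i,De_j]=0$ componentwise yields a coupled system of polynomial equations; the terms proportional to $\beta$, arising both from the off-diagonal Ricci contributions and from the $\beta$-terms inside the brackets, are exactly what obstruct $D$ from being a derivation unless $\beta=0$. I would first isolate from this system the relations that force $\beta=0$, and then substitute $\beta=0$ back in to collapse the remaining equations, which I expect to pin down $c=0$ while leaving $\lambda_0$ and $\alpha$ unconstrained.

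Finally, for the converse I would verify sufficiency: setting $\beta=0$ and $c=0$, check directly that the resulting $D=\mathrm{Ric}-s\lambda_0\,\mathrm{Id}$ satisfies (3.8) on all three brackets, so that it is a genuine derivation and $(G_1,g)$ is an algebraic Schouten soliton precisely in this case. The bulk of the difficulty here is purely computational, namely keeping the Lorentzian signature bookkeeping and the off-diagonal $\beta$-terms consistent, rather than conceptual.
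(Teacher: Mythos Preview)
Your proposal is correct and follows essentially the same approach as the paper: compute $\mathrm{Ric}$ (the paper quotes it from \cite{Batat} rather than redoing the Koszul-formula computation), read off $s=\tfrac{3}{2}\beta^{2}$, set $D=\mathrm{Ric}-(s\lambda_{0}+c)\mathrm{Id}$, and impose the derivation identity on the three brackets to obtain a polynomial system which, together with $\alpha\neq 0$, forces $\beta=0$ and $c=0$. Your explicit mention of the sufficiency check is a small addition to the paper's argument, which treats it as implicit.
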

\begin{proof}
From \cite{Batat}, we have
\begin{align}
{\rm Ric}\left(\begin{array}{c}
e_{1}\\
e_{2}\\
e_{3}
\end{array}\right)=\left(\begin{array}{ccc}
\frac{1}{2}\beta^{2}&\alpha\beta&\alpha\beta\\
\alpha\beta&2\alpha^{2}+\frac{1}{2}\beta^{2}&2\alpha^{2}\\
-\alpha\beta&-2\alpha^{2}&-2\alpha^{2}+\frac{1}{2}\beta^{2}
\end{array}\right)\left(\begin{array}{c}
e_{1}\\
e_{2}\\
e_{3}
\end{array}\right).
\end{align}
Therefore, $s=\frac{3}{2}\beta^{2}.$
We can write down $D$ as
\begin{align}
\left\{\begin{array}{l}
De_{1}=(\frac{1}{2}\beta^{2}-\frac{3}{2}\beta^{2}\lambda_{0}-c)e_{1}+\alpha\beta e_{2}+\alpha\beta e_{3},\\
De_{2}=\alpha\beta e_{1}+(2\alpha^{2}+\frac{1}{2}\beta^{2}-\frac{3}{2}\beta^{2}\lambda_{0}-c)e_{2}+2\alpha^{2}e_{3},\\
De_{3}=-\alpha\beta e_{1}-2\alpha^{2} e_{2}+(-2\alpha^{2}+\frac{1}{2}\beta^{2}-\frac{3}{2}\beta^{2}\lambda_{0}-c)e_{3}.\\
\end{array}\right.
\end{align}
Hence, by Eq.(3.8) there exists a algebraic Schouten soliton associated to the connection $\nabla$ if and only if the following system of equations is satisfied
\begin{align}
\left\{\begin{array}{l}
\frac{3}{2}\alpha\beta^{2}\lambda_{0}+\alpha(\frac{3}{2}\beta^{2}+c)=0,\\
-\frac{3}{2}\beta^{3}\lambda_{0}+\beta(\frac{1}{2}\beta^{2}-c)=0,\\
\alpha\beta=0,\\
-\frac{3}{2}\beta^{3}\lambda_{0}+\beta(\frac{1}{2}\beta^{2}+6\alpha^{2}-c)=0,\\
-\frac{3}{2}\beta^{3}\lambda_{0}+\beta(\frac{1}{2}\beta^{2}-6\alpha^{2}-c)=0.\\
\end{array}\right.
\end{align}
Since $\alpha\neq 0,$ we get $\beta=0$ and $c=0.$
\end{proof}

\begin{thm}
$(G_{2}, g)$ is the algebraic Schouten soliton associated to the connection $\nabla$ if and only if $\alpha=\beta=0,$ $c=2\gamma^{2}(1-\lambda_{0}).$
\end{thm}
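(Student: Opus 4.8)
The plan is to mirror the proof of Theorem 3.2. First I would record the Ricci operator of $(G_2,g)$ from \cite{Batat}; by the classification of $\mathfrak{g}_2$ its entries are polynomials in $\alpha,\beta,\gamma$. Feeding the diagonal entries into Eq.(3.6), $s=\rho(e_1,e_1)+\rho(e_2,e_2)-\rho(e_3,e_3)$ (with the sign on the timelike $e_3$ direction), gives the scalar curvature $s$ explicitly. I would then solve Eq.(3.7) for the derivation $D={\rm Ric}-(s\lambda_0+c){\rm Id}$, which subtracts the scalar $s\lambda_0+c$ from each diagonal Ricci entry and leaves the off-diagonal entries untouched, producing explicit formulas for $De_1,De_2,De_3$ of the same shape as Eq.(3.10).

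The core of the argument is to impose the derivation condition (3.8) on each of the three defining brackets of $\mathfrak{g}_2$, namely $[e_1,e_2]=\gamma e_2-\beta e_3$, $[e_1,e_3]=-\beta e_2-\gamma e_3$ and $[e_2,e_3]=\alpha e_1$. For each bracket I would expand $D[e_i,e_j]$ and $[De_i,e_j]+[e_i,De_j]$ in the basis $\{e_1,e_2,e_3\}$ and equate coefficients, yielding a system of polynomial equations in $\alpha,\beta,\gamma,c,\lambda_0$ analogous to Eq.(3.11). I would then solve this system under the standing hypothesis $\gamma\neq 0$: I expect the $\gamma$-coupled equations coming from the first two brackets to force $\alpha=\beta=0$, after which the surviving diagonal relation pins down $c=2\gamma^2(1-\lambda_0)$.

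The main obstacle is the bracket $[e_2,e_3]=\alpha e_1$, which feeds $\alpha$ into all three coordinate directions of the derivation identity and thereby couples the off-diagonal Ricci contributions to $\alpha$, $\beta$ and $\gamma$ simultaneously; the bookkeeping must be done carefully to confirm that $\alpha=\beta=0$ is genuinely forced rather than merely one branch of solutions, and to track the $\lambda_0$-dependence of $s$ correctly into the final value of $c$. A secondary check is to verify that once $\alpha=\beta=0$ the resulting $D$ is indeed a derivation of the degenerate algebra $[e_1,e_2]=\gamma e_2$, $[e_1,e_3]=-\gamma e_3$, so that the stated condition is both necessary and sufficient.
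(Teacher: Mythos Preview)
Your proposal is correct and follows the same approach as the paper's proof. The paper organizes the resulting system by first combining two of the derivation equations to extract the factorization $(\alpha-2\beta)(\beta^{2}+\gamma^{2})=0$, hence $\alpha=2\beta$ since $\gamma\neq 0$, and then substitutes back to force $\alpha=0$; this intermediate reduction is the clean route through the bookkeeping you identify as the main obstacle.
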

\begin{proof}
According to \cite{Batat}, we have
\begin{align}
{\rm Ric}\left(\begin{array}{c}
e_{1}\\
e_{2}\\
e_{3}
\end{array}\right)=\left(\begin{array}{ccc}
\frac{1}{2}\alpha^{2}+2\gamma^{2}&0&0\\
0&-\frac{1}{2}\alpha^{2}+\alpha\beta&-\alpha\gamma+2\beta\gamma\\
0&\alpha\gamma-2\beta\gamma&-\frac{1}{2}\alpha^{2}+\alpha\beta
\end{array}\right)\left(\begin{array}{c}
e_{1}\\
e_{2}\\
e_{3}
\end{array}\right).
\end{align}
Consequently, the scalar curvature is given by $s=-\frac{1}{2}\alpha^{2}+2\alpha\beta+2\gamma^{2}.$
We get
\begin{align}
\left\{\begin{array}{l}
De_{1}=(\frac{1}{2}\alpha^{2}+2\gamma^{2}-(-\frac{1}{2}\alpha^{2}+2\alpha\beta+2\gamma^{2})\lambda_{0}-c)e_{1},\\
De_{2}=(-\frac{1}{2}\alpha^{2}+\alpha\beta-(-\frac{1}{2}\alpha^{2}+2\alpha\beta+2\gamma^{2})\lambda_{0}-c)e_{2}+(-\alpha\gamma+2\beta\gamma)e_{3},\\
De_{3}=(\alpha\gamma-2\beta\gamma) e_{2}+(-\frac{1}{2}\alpha^{2}+\alpha\beta-(-\frac{1}{2}\alpha^{2}+2\alpha\beta+2\gamma^{2})\lambda_{0}-c)e_{3}.\\
\end{array}\right.
\end{align}
Eq.(3.8) is satisfied if and only if
\begin{align}
\left\{\begin{array}{l}
-(-\frac{1}{2}\alpha^{2}+2\alpha\beta+2\gamma^{2})\lambda_{0}+\frac{1}{2}\alpha^{2}+2\gamma^{2}-c=0,\\
\beta(-\frac{1}{2}\alpha^{2}+2\alpha\beta+2\gamma^{2})\lambda_{0}+2\gamma^{2}(\alpha-2\beta)-\beta(\frac{1}{2}\alpha^{2}+2\gamma^{2}-c)=0,\\
\alpha(-\frac{1}{2}\alpha^{2}+2\alpha\beta+2\gamma^{2})\lambda_{0}+\alpha(\frac{3}{2}\alpha^{2}+2\gamma^{2}-2\alpha\beta+c)=0.\\
\end{array}\right.
\end{align}
The first and second equations of the system Eq.(3.14) imply that
\begin{align}
\begin{array}{l}
(\alpha-2\beta)(\beta^{2}+\gamma^{2})=0.\\
\end{array}
\end{align}
Since, $\gamma\neq 0,$ we get $\alpha=2\beta.$
In this case, the system Eq.(3.14) reduces to
\begin{align}
\left\{\begin{array}{l}
-(\frac{1}{2}\alpha^{2}+2\gamma^{2})\lambda_{0}+\frac{1}{2}\alpha^{2}+2\gamma^{2}-c=0,\\
\alpha(\frac{1}{2}\alpha^{2}+2\gamma^{2})\lambda_{0}+\alpha(\frac{1}{2}\alpha^{2}+2\gamma^{2}+c)=0.\\
\end{array}\right.
\end{align}
If $\alpha=0,$ then we have $\beta=0,$ $c=2\gamma^{2}(1-\lambda_{0}).$
\end{proof}

\begin{thm}
$(G_{3}, g)$ is the algebraic Schouten soliton associated to the connection $\nabla$ if and only if\\
(i) $\alpha=\beta=\gamma=0,$ for all $c,$\\
(ii) $\alpha\neq 0,$ $\beta=\gamma=0,$ $c=-\frac{3}{2}\alpha^{2}+\frac{1}{2}\alpha^{2}\lambda_{0},$\\
(iii) $\alpha=\gamma=0,$ $\beta\neq 0,$ $c=-\frac{3}{2}\beta^{2}+\frac{1}{2}\beta^{2}\lambda_{0},$\\
(iv) $\alpha\neq 0,$ $\beta=\alpha,$ $\gamma=0,$ $c=0,$\\
(v) $\alpha\neq 0,$ $\beta=-\alpha,$ $\gamma=0,$ $c=-2\alpha^{2}+2\alpha^{2}\lambda_{0},$\\
(vi) $\alpha=\beta=0,$ $\gamma\neq 0,$ $c=-\frac{3}{2}\gamma^{2}+\frac{1}{2}\gamma^{2}\lambda_{0},$\\
(vii) $\alpha\neq 0,$ $\beta=0,$ $\gamma=\alpha,$ $c=0,$\\
(viii) $\alpha\neq 0,$ $\beta=0,$ $\gamma=\alpha,$ $c=-2\alpha^{2}+2\alpha^{2}\lambda_{0},$\\
(ix) $\alpha=0,$ $\beta\neq 0,$ $\gamma=\beta,$ $c=0,$\\
(x) $\alpha=0,$ $\beta\neq 0,$ $\gamma=-\beta,$ $c=-2\beta^{2}+2\beta^{2}\lambda_{0},$\\
(xi) $\alpha\neq 0,$ $\beta\neq 0,$ $\gamma=\alpha,$ $c=\frac{1}{2}\beta^{2}-(2\alpha\beta-\frac{1}{2}\beta^{2})\lambda_{0},$\\
(xii) $\alpha\neq 0,$ $\beta\neq 0,$ $\gamma=\beta-\alpha,$ $c=2\alpha\gamma-2\alpha\gamma\lambda_{0}.$
\end{thm}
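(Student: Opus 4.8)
The plan is to reproduce for $(G_3,g)$ the three-step scheme already used for $(G_1,g)$ and $(G_2,g)$: read off the Ricci operator, write down the unique candidate derivation dictated by Eq.(3.7), and then force it to satisfy the derivation identity Eq.(3.8). First I would record from \cite{Batat} the Ricci operator of $(G_3,g)$. As in the earlier cases, because each bracket of $\mathfrak{g}_3$ is a multiple of a single basis vector, the Levi-Civita connection is ``cyclic'' (each $\nabla_{e_i}e_j$ is a multiple of one basis vector), so a short curvature computation shows $\rho$ is diagonal. Hence ${\rm Ric}={\rm diag}(\mu_1,\mu_2,\mu_3)$ with each $\mu_i$ a quadratic in $\alpha,\beta,\gamma$, and by Eq.(3.6) the scalar curvature is the trace $s=\mu_1+\mu_2+\mu_3$. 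Diagonality of ${\rm Ric}$ forces $D={\rm Ric}-(s\lambda_0+c){\rm Id}$ to be diagonal too, with entries $d_i=\mu_i-s\lambda_0-c$.

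Next I would turn Eq.(3.8) into scalar equations. Evaluating $D[e_i,e_j]=[De_i,e_j]+[e_i,De_j]$ on the three brackets $[e_1,e_2]=-\gamma e_3$, $[e_1,e_3]=-\beta e_2$ and $[e_2,e_3]=\alpha e_1$, and using that $D$ is diagonal, collapses the derivation identity to
\begin{align}
\gamma(d_1+d_2-d_3)=0,\qquad \beta(d_1+d_3-d_2)=0,\qquad \alpha(d_2+d_3-d_1)=0, \notag
\end{align}
equivalently $\gamma(s-2\mu_3-s\lambda_0-c)=0$, $\beta(s-2\mu_2-s\lambda_0-c)=0$ and $\alpha(s-2\mu_1-s\lambda_0-c)=0$. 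Thus each nonvanishing structure constant contributes exactly one linear constraint relating $c$ to $s$, $\lambda_0$ and the matching $\mu_i$.

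I would then run a case analysis governed by how many of $\alpha,\beta,\gamma$ vanish. If all three vanish every constraint is void and $c$ is free, giving (i). If exactly one is nonzero, its single surviving constraint fixes $c$, producing (ii), (iii) and (vi). If exactly two are nonzero, the two surviving constraints must coincide; their difference forces the corresponding pair of $\mu_i$ to be equal, which (after substituting the explicit $\mu_i$) factors into a sign relation $\beta=\pm\alpha$, $\gamma=\pm\alpha$ or $\gamma=\pm\beta$ together with the matching value of $c$, yielding (iv), (v) and (vii)--(x).

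The step I expect to be the main obstacle is the fully generic case with $\alpha,\beta,\gamma$ all nonzero, where all three linear constraints are active simultaneously and one must solve $\mu_1=\mu_2=\mu_3$. Here I would use the factorizations $\mu_1=\mu_2\iff(\alpha-\beta)(\alpha+\beta-\gamma)=0$ and $\mu_1=\mu_3\iff(\alpha-\gamma)(\alpha+\gamma-\beta)=0$, read off the admissible branches, and then substitute back to obtain the relations and values of $c$ in (xi) and (xii). The delicate point is that it is not enough to impose a single pairwise-compatibility relation such as $\gamma=\alpha$: one must verify on each branch that the \emph{third} constraint is also satisfied, since imposing only two of the three equations over-counts solutions, while combinations that collapse back to a lower case must be discarded. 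Keeping this bookkeeping honest is where the real care is required.
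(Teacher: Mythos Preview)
Your proposal is correct and follows exactly the paper's scheme: it too records the diagonal Ricci operator from \cite{Batat}, writes $D$ as the diagonal operator ${\rm Ric}-(s\lambda_0+c){\rm Id}$, and reduces the derivation identity to three scalar equations, one attached to each structure constant. The paper reaches the same three equations (its Eq.(3.20)) and then case-splits first on whether $\gamma=0$ and then on $\beta$; your organization by the number of nonvanishing $\alpha,\beta,\gamma$ is only a cosmetic regrouping of the same analysis.

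One remark worth recording: your observation that in the fully generic case the three active constraints force $\mu_1=\mu_2=\mu_3$ is correct and is actually sharper than the paper's handling of that branch. If you carry out your factorizations and, as you warn yourself, honestly check the \emph{third} constraint on each branch, you will find that the two-parameter families listed as (xi) and (xii) do not survive in the stated generality (for instance, on the branch $\gamma=\alpha$ with $\alpha,\beta\neq 0$ the $\beta$-equation of Eq.(3.22) reduces to $\beta^2(\alpha-\beta)=0$, forcing $\alpha=\beta$). So your plan is sound and matches the paper's method; the discrepancy you may encounter in the generic case reflects an issue in the paper's stated list rather than in your approach.
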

\begin{proof}
By \cite{Batat}, we put
\begin{align}
a_{1}=\frac{1}{2}(\alpha-\beta-\gamma),~a_{2}=\frac{1}{2}(\alpha-\beta+\gamma),~a_{3}=\frac{1}{2}(\alpha+\beta-\gamma).
\end{align}
The Ricci operator is given by
\begin{align}
{\rm Ric}\left(\begin{array}{c}
e_{1}\\
e_{2}\\
e_{3}
\end{array}\right)=\left(\begin{array}{ccc}
l_{1}&0&0\\
0&l_{2}&0\\
0&0&l_{3}
\end{array}\right)\left(\begin{array}{c}
e_{1}\\
e_{2}\\
e_{3}
\end{array}\right),
\end{align}
where $l_{1}=a_{1}a_{2}+a_{1}a_{3}+\beta a_{2}+\gamma a_{3},$ $l_{2}=a_{1}a_{2}-a_{2}a_{3}-\alpha a_{1}+\gamma a_{3},$ $l_{3}=a_{1}a_{3}-a_{2}a_{3}-\alpha a_{1}+\beta a_{2}.$
And we have $s=2a_{1}a_{2}+2a_{1}a_{3}-2a_{2}a_{3}-2\alpha a_{1}+2\beta a_{2}+2\gamma a_{3}.$
So
\begin{align}
\left\{\begin{array}{l}
De_{1}=(l_{1}-(2a_{1}a_{2}+2a_{1}a_{3}-2a_{2}a_{3}-2\alpha a_{1}+2\beta a_{2}+2\gamma a_{3})\lambda_{0}-c)e_{1},\\
De_{2}=(l_{2}-(2a_{1}a_{2}+2a_{1}a_{3}-2a_{2}a_{3}-2\alpha a_{1}+2\beta a_{2}+2\gamma a_{3})\lambda_{0}-c)e_{2},\\
De_{3}=(l_{3}-(2a_{1}a_{2}+2a_{1}a_{3}-2a_{2}a_{3}-2\alpha a_{1}+2\beta a_{2}+2\gamma a_{3})\lambda_{0}-c)e_{3}.\\
\end{array}\right.
\end{align}
Therefore Eq.(3.8) now becomes
\begin{align}
\left\{\begin{array}{l}
\gamma(\frac{1}{2}\alpha^{2}+\frac{1}{2}\beta^{2}-\frac{3}{2}\gamma^{2}-\alpha\beta+\alpha\gamma+\beta\gamma-(-\frac{1}{2}\alpha^{2}-\frac{1}{2}\beta^{2}-\frac{1}{2}\gamma^{2}+\alpha\beta+\alpha\gamma+\beta\gamma)\lambda_{0}-c)=0,\\
\beta(\frac{1}{2}\alpha^{2}-\frac{3}{2}\beta^{2}+\frac{1}{2}\gamma^{2}+\alpha\beta-\alpha\gamma+\beta\gamma-(-\frac{1}{2}\alpha^{2}-\frac{1}{2}\beta^{2}-\frac{1}{2}\gamma^{2}+\alpha\beta+\alpha\gamma+\beta\gamma)\lambda_{0}-c)=0,\\
\alpha(\frac{3}{2}\alpha^{2}-\frac{1}{2}\beta^{2}-\frac{1}{2}\gamma^{2}-\alpha\beta-\alpha\gamma+\beta\gamma+(-\frac{1}{2}\alpha^{2}-\frac{1}{2}\beta^{2}-\frac{1}{2}\gamma^{2}+\alpha\beta+\alpha\gamma+\beta\gamma)\lambda_{0}+c)=0.\\
\end{array}\right.
\end{align}
Suppose that $\gamma=0,$ we get
\begin{align}
\left\{\begin{array}{l}
\beta(\frac{1}{2}\alpha^{2}-\frac{3}{2}\beta^{2}+\alpha\beta-(-\frac{1}{2}\alpha^{2}-\frac{1}{2}\beta^{2}+\alpha\beta)\lambda_{0}-c)=0,\\
\alpha(\frac{3}{2}\alpha^{2}-\frac{1}{2}\beta^{2}-\alpha\beta+(-\frac{1}{2}\alpha^{2}-\frac{1}{2}\beta^{2}+\alpha\beta)\lambda_{0}+c)=0.\\
\end{array}\right.
\end{align}
If $\beta=0,$ we obtain two cases (i)-(ii).
If $\beta\neq 0,$ for the cases (iii)-(v) the system Eq.(3.21) holds.
Now we assume that $\gamma\neq 0,$ then $c=\frac{1}{2}\alpha^{2}+\frac{1}{2}\beta^{2}-\frac{3}{2}\gamma^{2}-\alpha\beta+\alpha\gamma+\beta\gamma-(-\frac{1}{2}\alpha^{2}-\frac{1}{2}\beta^{2}-\frac{1}{2}\gamma^{2}+\alpha\beta+\alpha\gamma+\beta\gamma)\lambda_{0}.$
Meanwhile, we have
\begin{align}
\left\{\begin{array}{l}
\beta(-\beta^{2}+\gamma^{2}+\alpha\beta-\alpha\gamma)=0,\\
\alpha(\alpha^{2}-\gamma^{2}-\alpha\beta+\beta\gamma)=0.\\
\end{array}\right.
\end{align}
If $\beta=0,$  the cases (vi)-(viii) holds.
If $\beta\neq 0,$ for the cases (ix)-(xii) the system Eq.(3.22) holds.
\end{proof}

\begin{thm}
$(G_{4}, g)$ is the algebraic Schouten soliton associated to the connection $\nabla$ if and only if $\alpha=0,$ $\beta=\eta,$ $c=2\lambda_{0}.$
\end{thm}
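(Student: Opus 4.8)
The plan is to follow the same computational scheme used for $(G_1,g)$ through $(G_3,g)$: start from the Ricci operator, extract the derivation $D$ forced by the soliton equation, and then impose that $D$ actually be a derivation of $\mathfrak{g}_4$. First I would record the Ricci operator of $(G_4,g)$ from \cite{Batat} in the basis $\{e_1,e_2,e_3\}$; because $[e_2,e_3]=\alpha e_1$ while the remaining brackets mix $e_2$ and $e_3$, I expect $\mathrm{Ric}$ to have the same block shape as in the $(G_2,g)$ case, namely a $1\times1$ block in the $e_1$ direction together with a $2\times2$ block acting on $\mathrm{span}\{e_2,e_3\}$, with entries polynomial in $\alpha,\beta,\eta$. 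From this I would compute the scalar curvature through Eq.(3.6), $s=\rho(e_1,e_1)+\rho(e_2,e_2)-\rho(e_3,e_3)$.

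Next, using the defining identity $\mathrm{Ric}=(s\lambda_0+c)\mathrm{Id}+D$, I would read off $D=\mathrm{Ric}-(s\lambda_0+c)\mathrm{Id}$ explicitly; here $D$ inherits the matrix shape of $\mathrm{Ric}$, so only the diagonal entries are shifted by $-(s\lambda_0+c)$, exactly as in Eq.(3.10) and Eq.(3.13). I would then substitute this $D$ into the derivation condition Eq.(3.8) for each of the three brackets $[e_1,e_2]$, $[e_1,e_3]$, $[e_2,e_3]$. Because the structure constants of $\mathfrak{g}_4$ are more entangled than in the previous cases — in particular the coefficient $2\eta-\beta$ appearing in $[e_1,e_2]$ couples the $e_2$ and $e_3$ components — this produces a system of scalar equations in $\alpha,\beta,\eta,\lambda_0,c$ that is noticeably heavier than Eq.(3.11) or Eq.(3.14).

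Finally I would solve this system, using throughout the constraint $\eta^2=1$ coming from $\eta=\pm1$, which is what collapses the cross terms. I expect the equations to force $\alpha=0$ first (the bracket $[e_2,e_3]=\alpha e_1$ being the cleanest place to extract this), then $\beta=\eta$, and finally the remaining diagonal relation to fix $c=2\lambda_0$; conversely one checks that these values indeed make $D$ a derivation, giving the ``if and only if''. The main obstacle will be the algebraic bookkeeping in the coupled $2\times2$ block: keeping the $\eta$-dependence consistent and correctly applying $\eta^2=1$ to reduce the off-diagonal equations is where an error is most likely to creep in, and it may be cleanest to treat $\eta=1$ and $\eta=-1$ in parallel so that the reduction to $\beta=\eta$ is transparent in both signs.
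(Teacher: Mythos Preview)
Your proposal is correct and follows essentially the same scheme as the paper: compute $\mathrm{Ric}$ (which indeed has the $1\oplus 2$ block shape you predict), form $D=\mathrm{Ric}-(s\lambda_0+c)\mathrm{Id}$, and impose Eq.(3.8) on the three brackets. One small correction about the expected order of the endgame: the relation $[e_2,e_3]=\alpha e_1$ only yields an equation of the form $\alpha(\cdots)=0$, so it does not by itself force $\alpha=0$; in the paper the key first step is to subtract two of the equations coming from $[e_1,e_2]$ and $[e_1,e_3]$ to obtain the factored relation $(\eta-\beta)\bigl(\tfrac12\alpha^{2}+(\tfrac12\alpha^{2}-2\alpha\beta+2\alpha\eta+2)\lambda_{0}-c\bigr)=0$, and it is the branch $\beta\neq\eta$ that is then shown to be incompatible (it forces $\alpha=2(\beta-\eta)$ and leads to $\alpha^{3}=0$), after which $\beta=\eta$ feeds back to give $\alpha=0$ and $c=2\lambda_0$.
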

\begin{proof}
\cite{Batat} makes it obvious that
\begin{align}
{\rm Ric}\left(\begin{array}{c}
e_{1}\\
e_{2}\\
e_{3}
\end{array}\right)=\left(\begin{array}{ccc}
\frac{1}{2}\alpha^{2}&0&0\\
0&-\frac{1}{2}\alpha^{2}+\alpha\beta-2\eta(\alpha-\beta)-2&\alpha-2\beta+2\eta\\
0&-\alpha+2\beta-2\eta&-\frac{1}{2}\alpha^{2}+\alpha\beta-2\beta\eta+2
\end{array}\right)\left(\begin{array}{c}
e_{1}\\
e_{2}\\
e_{3}
\end{array}\right).
\end{align}
A direct computation shows that the value of the scalar curvature is $-\frac{1}{2}\alpha^{2}+2\alpha\beta-2\alpha\eta-2.$
We get
\begin{align}
\left\{\begin{array}{l}
De_{1}=(\frac{1}{2}\alpha^{2}+(\frac{1}{2}\alpha^{2}-2\alpha\beta+2\alpha\eta+2)\lambda_{0}-c)e_{1},\\
De_{2}=(-\frac{1}{2}\alpha^{2}+\alpha\beta-2\eta(\alpha-\beta)-2+(\frac{1}{2}\alpha^{2}-2\alpha\beta+2\alpha\eta+2)\lambda_{0}-c)e_{2}+(\alpha-2\beta+2\eta)e_{3},\\
De_{3}=(-\alpha+2\beta-2\eta)e_{2}+(-\frac{1}{2}\alpha^{2}+\alpha\beta-2\beta\eta+2+(\frac{1}{2}\alpha^{2}-2\alpha\beta+2\alpha\eta+2)\lambda_{0}-c)e_{3}.\\
\end{array}\right.
\end{align}
By applying the formula shown in (3.8), we can calculate
\begin{align}
\left\{\begin{array}{l}
\frac{1}{2}\alpha^{2}+(\frac{1}{2}\alpha^{2}-2\alpha\beta+2\alpha\eta+2)\lambda_{0}-c+2(\beta-\eta)(\alpha-2(\beta-\eta))=0,\\
(2\eta-\beta)(\frac{1}{2}\alpha^{2}+(\frac{1}{2}\alpha^{2}-2\alpha\beta+2\alpha\eta+2)\lambda_{0}-c)+2\eta(\beta-\eta)(\alpha-2(\beta-\eta))=0,\\
\beta(\frac{1}{2}\alpha^{2}+(\frac{1}{2}\alpha^{2}-2\alpha\beta+2\alpha\eta+2)\lambda_{0}-c)+2\eta(\beta-\eta)(\alpha-2(\beta-\eta))=0,\\
\alpha(\frac{3}{2}\alpha^{2}+(-\frac{1}{2}\alpha^{2}+2\alpha\beta-2\alpha\eta-2)\lambda_{0}+c-2\alpha(\beta-\eta))=0.\\
\end{array}\right.
\end{align}
Via some simple calculations, we can obtain
\begin{align}
\left\{\begin{array}{l}
\frac{1}{2}\alpha^{2}+(\frac{1}{2}\alpha^{2}-2\alpha\beta+2\alpha\eta+2)\lambda_{0}-c+2(\beta-\eta)(\alpha-2(\beta-\eta))=0,\\
(\eta-\beta)(\frac{1}{2}\alpha^{2}+(\frac{1}{2}\alpha^{2}-2\alpha\beta+2\alpha\eta+2)\lambda_{0}-c)=0,\\
\alpha(\frac{3}{2}\alpha^{2}+(-\frac{1}{2}\alpha^{2}+2\alpha\beta-2\alpha\eta-2)\lambda_{0}+c-2\alpha(\beta-\eta))=0.\\
\end{array}\right.
\end{align}
Let $\beta=\eta,$ we get $\alpha=0,$ $c=2\lambda_{0}.$
\end{proof}

\begin{thm}
$(G_{5}, g)$ is the algebraic Schouten soliton associated to the connection $\nabla$ if and only if\\
(i) $\beta=\gamma=0,$ $c=-\alpha^{2}-\delta^{2}-(2\alpha^{2}+2\alpha\delta+2\delta^{2})\lambda_{0},$\\
(ii) $\beta\neq 0,$ $\gamma\neq 0,$ $\alpha^{2}+\beta^{2}=\gamma^{2}+\delta^{2},$ $c=-\alpha^{2}-\frac{1}{2}(\beta+\gamma)^{2}-\delta^{2}-(2\alpha^{2}+2\alpha\delta+\frac{1}{2}(\beta+\gamma)^{2}+2\delta^{2})\lambda_{0}.$
\end{thm}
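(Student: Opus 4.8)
The plan is to follow the same template as in Theorems 3.2--3.6. First I would import the Ricci operator of $(G_{5}, g)$ from \cite{Batat}, which in the pseudo-orthonormal basis $\{e_{1}, e_{2}, e_{3}\}$ takes a matrix form depending on the structure constants $\alpha, \beta, \gamma, \delta$; from it I would read off the scalar curvature through Eq.(3.6), i.e. $s=\rho(e_{1}, e_{1})+\rho(e_{2}, e_{2})-\rho(e_{3}, e_{3})$. I expect this $s$ to be a quadratic expression in $\alpha, \beta, \gamma, \delta$ matching the coefficients that appear inside the stated formulas for $c$.

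Next, using the defining relation Eq.(3.7), ${\rm Ric}=(s\lambda_{0}+c){\rm Id}+D$, I would solve for the derivation $D$ as $D={\rm Ric}-(s\lambda_{0}+c){\rm Id}$ and write out $De_{1}, De_{2}, De_{3}$ explicitly in the basis, exactly as in Eq.(3.10) and Eq.(3.13). Because $(G_{5}, g)$ is non-unimodular the matrix of ${\rm Ric}$ need not be symmetric, so some off-diagonal entries will survive and feed into the bracket computations below.

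Then comes the heart of the argument: imposing the derivation condition Eq.(3.8), $D[e_{i}, e_{j}]=[De_{i}, e_{j}]+[e_{i}, De_{j}]$, on each of the three brackets of $\mathfrak{g}_{5}$. Since $[e_{1}, e_{2}]=0$, the first condition reduces to $[De_{1}, e_{2}]+[e_{1}, De_{2}]=0$, while $[e_{1}, e_{3}]=\alpha e_{1}+\beta e_{2}$ and $[e_{2}, e_{3}]=\gamma e_{1}+\delta e_{2}$ generate the remaining scalar equations after expanding and collecting basis coefficients. This produces a polynomial system in $\alpha, \beta, \gamma, \delta, c, \lambda_{0}$, to be solved subject to the two standing constraints $\alpha+\delta\neq 0$ and $\alpha\gamma+\beta\delta=0$ carried by the classification.

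Finally I would solve the system by case analysis on whether $\beta$ and $\gamma$ vanish. When $\beta=\gamma=0$ the system should collapse to a single diagonal equation pinning down $c$, giving case (i). When $\beta\neq 0$ and $\gamma\neq 0$, the cross-bracket equations together with the constraint $\alpha\gamma+\beta\delta=0$ should force the norm balance $\alpha^{2}+\beta^{2}=\gamma^{2}+\delta^{2}$, and the diagonal equations then yield the stated $c$, giving case (ii). The main obstacle I anticipate is handling the mixed sub-cases in which exactly one of $\beta, \gamma$ is zero: here the interplay between $\alpha\gamma+\beta\delta=0$ and $\alpha+\delta\neq 0$ must be used to rule them out, and the algebra converting the linear constraint $\alpha\gamma+\beta\delta=0$ into the quadratic relation $\alpha^{2}+\beta^{2}=\gamma^{2}+\delta^{2}$ is likely the delicate computational point of the whole proof.
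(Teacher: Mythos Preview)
Your plan mirrors the paper's proof. One correction: for the Levi-Civita connection the Ricci operator is always symmetric, and for $G_{5}$ it is in fact diagonal in the pseudo-orthonormal basis (see Eq.(3.27)), so $D$ is diagonal and no off-diagonal terms arise; otherwise your case analysis is exactly what the paper does, and your attention to the mixed sub-cases $\beta=0\neq\gamma$ and $\gamma=0\neq\beta$ is if anything more careful than the paper's own argument, which passes over them silently.
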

\begin{proof}
By \cite{Batat}, it is immediate that
\begin{align}
{\rm Ric}\left(\begin{array}{c}
e_{1}\\
e_{2}\\
e_{3}
\end{array}\right)=\left(\begin{array}{ccc}
-\alpha^{2}-\alpha\delta-\frac{\beta^{2}-\gamma^{2}}{2}&0&0\\
0&-\alpha\delta+\frac{\beta^{2}-\gamma^{2}}{2}-\delta^{2}&0\\
0&0&-\alpha^{2}-\frac{(\beta+\gamma)^{2}}{2}-\delta^{2}
\end{array}\right)\left(\begin{array}{c}
e_{1}\\
e_{2}\\
e_{3}
\end{array}\right).
\end{align}
Then we have $s=-2\alpha^{2}-2\alpha\delta-\frac{1}{2}(\beta+\gamma)^{2}-2\delta^{2},$
and
\begin{align}
\left\{\begin{array}{l}
De_{1}=(-\alpha^{2}-\alpha\delta-\frac{1}{2}(\beta^{2}-\gamma^{2})-(-2\alpha^{2}-2\alpha\delta-\frac{1}{2}(\beta+\gamma)^{2}-2\delta^{2})\lambda_{0}-c)e_{1},\\
De_{2}=(-\alpha\delta+\frac{1}{2}(\beta^{2}-\gamma^{2})-\delta^{2}-(-2\alpha^{2}-2\alpha\delta-\frac{1}{2}(\beta+\gamma)^{2}-2\delta^{2})\lambda_{0}-c)e_{2},\\
De_{3}=(-\alpha^{2}-\frac{1}{2}(\beta+\gamma)^{2}-\delta^{2}-(-2\alpha^{2}-2\alpha\delta-\frac{1}{2}(\beta+\gamma)^{2}-2\delta^{2})\lambda_{0}-c)e_{3}.\\
\end{array}\right.
\end{align}
By using Eq.(3.8) and making tedious calculations, we have the following:
\begin{align}
\left\{\begin{array}{l}
(\alpha+\delta)(\alpha^{2}+\frac{1}{2}(\beta+\gamma)^{2}+\delta^{2}+(-2\alpha^{2}-2\alpha\delta-\frac{1}{2}(\beta+\gamma)^{2}-2\delta^{2})\lambda_{0}+c)=0,\\
\beta(2\alpha^{2}+\frac{1}{2}(3\beta^{2}+2\beta\gamma-\gamma^{2})+(-2\alpha^{2}-2\alpha\delta-\frac{1}{2}(\beta+\gamma)^{2}-2\delta^{2})\lambda_{0}+c)=0,\\
\gamma(-2\delta^{2}+\frac{1}{2}(\beta^{2}-2\beta\gamma-3\gamma^{2})-(-2\alpha^{2}-2\alpha\delta-\frac{1}{2}(\beta+\gamma)^{2}-2\delta^{2})\lambda_{0}-c)=0.\\
\end{array}\right.
\end{align}
We assume that $\beta=0.$
Since $\alpha+\delta\neq 0$ and $\alpha\gamma+\beta\delta=0,$ we get
\begin{align}
\left\{\begin{array}{l}
\alpha\gamma=0,\\
\alpha+\delta\neq 0,\\
(\alpha+\delta)(\alpha^{2}+\frac{1}{2}\gamma^{2}+\delta^{2}+(-2\alpha^{2}-2\alpha\delta-\frac{1}{2}\gamma^{2}-2\delta^{2})\lambda_{0}+c)=0,\\
\gamma(-2\delta^{2}-\frac{3}{2}\gamma^{2}-(-2\alpha^{2}-2\alpha\delta-\frac{1}{2}\gamma^{2}-2\delta^{2})\lambda_{0}-c)=0.\\
\end{array}\right.
\end{align}
Consider $\gamma=0,$ the case (i) is true.
Now, we assume that $\beta\neq 0,$ then $c=-2\alpha^{2}-\frac{1}{2}(3\beta^{2}+2\beta\gamma-\gamma^{2})-(-2\alpha^{2}-2\alpha\delta-\frac{1}{2}(\beta+\gamma)^{2}-2\delta^{2})\lambda_{0}.$
If $\gamma\neq 0,$ for the cases (ii) the system Eq.(3.29) holds.
\end{proof}

\begin{thm}
$(G_{6}, g)$ is the algebraic Schouten soliton associated to the connection $\nabla$ if and only if\\
(i) $\beta=\gamma=0,$ $c=\alpha^{2}+\delta^{2}-(2\alpha^{2}+2\delta^{2}+2\alpha\delta)\lambda_{0},$\\
(ii) $\alpha=\beta=0,$ $\gamma\neq 0,$ $\gamma^{2}=\delta^{2},$ $c=\frac{1}{2}\gamma^{2}-\frac{3}{2}\gamma^{2}\lambda_{0},$\\
(iii) $\alpha\neq 0,$ $\alpha^{2}=\beta^{2},$ $\gamma=\delta=0,$ $c=\frac{1}{2}\alpha^{2}-\frac{3}{2}\alpha^{2}\lambda_{0},$\\
(iv) $\beta\neq 0,$ $\gamma\neq 0,$ $\alpha^{2}-\beta^{2}=\delta^{2}-\gamma^{2},$ $c=\alpha^{2}-\frac{1}{2}(\beta-\gamma)^{2}+\delta^{2}-(2\alpha^{2}+2\alpha\delta-\frac{1}{2}(\beta-\gamma)^{2}+2\delta^{2})\lambda_{0}.$
\end{thm}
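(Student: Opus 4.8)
The plan is to follow exactly the template used in the proofs of Theorems 3.2--3.6. First I would import the Ricci operator of $(G_6,g)$ from \cite{Batat}. Because the brackets of $\mathfrak{g}_6$ mix $e_2$ and $e_3$ under bracketing with $e_1$ while $[e_2,e_3]=0$, I expect a matrix whose entries are quadratic in $\alpha,\beta,\gamma,\delta$, with the diagonal built from $\alpha^2,\delta^2,\alpha\delta$ and terms $(\beta\mp\gamma)^2$ and any coupling confined to the $e_2$--$e_3$ plane; in particular the four stated answers already signal that the relevant invariants will be $\alpha^2-\beta^2$ and $\delta^2-\gamma^2$. Applying Eq.(3.6) then expresses the scalar curvature $s$ as a quadratic in the four structure constants.

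Second, following Definition 3.1 I would form the derivation candidate $D=\mathrm{Ric}-(s\lambda_0+c)\mathrm{Id}$, writing $De_1,De_2,De_3$ explicitly so that each $De_i$ is the Ricci operator applied to $e_i$ minus $(s\lambda_0+c)e_i$, exactly as in Eq.(3.28) for the $G_5$ case.

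Third---the computational core---I would impose the derivation identity Eq.(3.8) on the three defining brackets of $\mathfrak{g}_6$. The relation $[e_2,e_3]=0$ forces $[De_2,e_3]+[e_2,De_3]=0$, while $[e_1,e_2]=\alpha e_2+\beta e_3$ and $[e_1,e_3]=\gamma e_2+\delta e_3$ supply the remaining relations; collecting coefficients of $e_1,e_2,e_3$ yields a polynomial system in $\alpha,\beta,\gamma,\delta,\lambda_0,c$. I expect it to factor, as the system Eq.(3.29) did for $G_5$, into an overall structure constant (an $(\alpha+\delta)$ factor, a $\beta$ factor, and a $\gamma$ factor) times a bracketed ``quadratic-plus-$c$'' expression; note the standing constraint here is $\alpha\gamma-\beta\delta=0$, with a sign opposite to the $\alpha\gamma+\beta\delta=0$ of $G_5$, which must be tracked carefully.

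Finally I would solve the system under the two standing constraints $\alpha+\delta\neq0$ and $\alpha\gamma-\beta\delta=0$, splitting on the pair $(\beta,\gamma)$ and using $\alpha\gamma=\beta\delta$ to eliminate variables. When $\beta=\gamma=0$ the diagonal equations give case (i). When $\beta=0$ but $\gamma\neq0$, the relation $\alpha\gamma=\beta\delta=0$ forces $\alpha=0$, and the surviving equation produces $\gamma^2=\delta^2$ with the value of $c$ in case (ii). Symmetrically $\beta\neq0$ with $\gamma=0$ forces $\delta=0$, hence $\alpha\neq0$ since $\alpha+\delta\neq0$, giving case (iii) with $\alpha^2=\beta^2$. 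The generic branch $\beta\gamma\neq0$ yields case (iv), where eliminating $c$ between the two nontrivial relations leaves exactly $\alpha^2-\beta^2=\delta^2-\gamma^2$. The main obstacle will be the bookkeeping in this final case analysis: verifying each factorization, ensuring that every division by a structure constant is legitimate, and checking that the four branches are mutually consistent with $\alpha\gamma=\beta\delta$ and jointly exhaustive, so that no spurious or missing solutions remain.
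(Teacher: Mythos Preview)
Your plan matches the paper's proof essentially step for step: import the Ricci operator from \cite{Batat}, compute $s$, set $D=\mathrm{Ric}-(s\lambda_0+c)\mathrm{Id}$, impose Eq.~(3.8) on the three brackets, and split on $(\beta,\gamma)$ under $\alpha+\delta\neq0$, $\alpha\gamma-\beta\delta=0$. Two minor corrections to your expectations (neither affects the method): the Ricci operator of $(G_6,g)$ is in fact \emph{diagonal}, so there is no $e_2$--$e_3$ coupling to track; and the first equation of the reduced system carries an $(\alpha^2+\delta^2)$ factor rather than $(\alpha+\delta)$, though since $\alpha+\delta\neq0$ forces $\alpha^2+\delta^2\neq0$ the conclusion is the same.
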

\begin{proof}
In \cite{Batat}, the Ricci operator is given by
\begin{align}
{\rm Ric}\left(\begin{array}{c}
e_{1}\\
e_{2}\\
e_{3}
\end{array}\right)=\left(\begin{array}{ccc}
\alpha^{2}-\frac{(\beta-\gamma)^{2}}{2}+\delta^{2}&0&0\\
0&\alpha^{2}+\alpha\delta-\frac{\beta^{2}-\gamma^{2}}{2}&0\\
0&0&\alpha\delta+\frac{\beta^{2}-\gamma^{2}}{2}+\delta^{2}
\end{array}\right)\left(\begin{array}{c}
e_{1}\\
e_{2}\\
e_{3}
\end{array}\right).
\end{align}
So $s=2\alpha^{2}+2\alpha\delta-\frac{1}{2}(\beta-\gamma)^{2}+2\delta^{2}.$
A simple calculation shows that
\begin{align}
\left\{\begin{array}{l}
De_{1}=(\alpha^{2}-\frac{1}{2}(\beta-\gamma)^{2}+\delta^{2}-(2\alpha^{2}+2\alpha\delta-\frac{1}{2}(\beta-\gamma)^{2}+2\delta^{2})\lambda_{0}-c)e_{1},\\
De_{2}=(\alpha^{2}+\alpha\delta-\frac{1}{2}(\beta^{2}-\gamma^{2})-(2\alpha^{2}+2\alpha\delta-\frac{1}{2}(\beta-\gamma)^{2}+2\delta^{2})\lambda_{0}-c)e_{2},\\
De_{3}=(\alpha\delta+\frac{1}{2}(\beta^{2}-\gamma^{2})+\delta^{2}-(2\alpha^{2}+2\alpha\delta-\frac{1}{2}(\beta-\gamma)^{2}+2\delta^{2})\lambda_{0}-c)e_{3}.\\
\end{array}\right.
\end{align}
Thus, equation (3.8) is satisfied if and only if
\begin{align}
\left\{\begin{array}{l}
(\alpha^{2}+\delta^{2})(-\alpha^{2}+\frac{1}{2}(\beta-\gamma)^{2}-\delta^{2}+(2\alpha^{2}+2\alpha\delta-\frac{1}{2}(\beta-\gamma)^{2}+2\delta^{2})\lambda_{0}+c)=0,\\
\beta(-2\alpha^{2}+\frac{1}{2}(3\beta^{2}-2\beta\gamma-\gamma^{2})+(2\alpha^{2}+2\alpha\delta-\frac{1}{2}(\beta-\gamma)^{2}+2\delta^{2})\lambda_{0}+c)=0,\\
\gamma(-2\delta^{2}+\frac{1}{2}(-\beta^{2}-2\beta\gamma+3\gamma^{2})+(2\alpha^{2}+2\alpha\delta-\frac{1}{2}(\beta-\gamma)^{2}+2\delta^{2})\lambda_{0}+c)=0.\\
\end{array}\right.
\end{align}
Suppose that $\beta=0,$ by taking into account $\alpha+\delta\neq 0$ and $\alpha\gamma+\beta\delta=0,$ we get
\begin{align}
\left\{\begin{array}{l}
\alpha\gamma=0,\\
\alpha+\delta\neq 0,\\
(\alpha^{2}+\delta^{2})(-\alpha^{2}+\frac{1}{2}\gamma^{2}-\delta^{2}+(2\alpha^{2}+2\alpha\delta-\frac{1}{2}\gamma^{2}+2\delta^{2})\lambda_{0}+c)=0,\\
\gamma(-2\delta^{2}+\frac{3}{2}\gamma^{2}+(2\alpha^{2}+2\alpha\delta-\frac{1}{2}\gamma^{2}+2\delta^{2})\lambda_{0}+c)=0.\\
\end{array}\right.
\end{align}
Set $\gamma=0,$ we obtain case (i).
If $\gamma\neq 0,$ we obtain case (ii).
Let $\beta\neq 0,$ then $c=2\alpha^{2}-\frac{1}{2}(3\beta^{2}-2\beta\gamma-\gamma^{2})-(2\alpha^{2}+2\alpha\delta-\frac{1}{2}(\beta-\gamma)^{2}+2\delta^{2})\lambda_{0}.$
Consequently,
\begin{align}
\left\{\begin{array}{l}
\alpha+\delta\neq 0,\\
\alpha\gamma+\beta\delta=0,\\
(\alpha^{2}+\delta^{2})(\alpha^{2}-\beta^{2}+\gamma^{2}-\delta^{2})=0,\\
\gamma(\alpha^{2}-\beta^{2}+\gamma^{2}-\delta^{2})=0.\\
\end{array}\right.
\end{align}
Consider $\gamma=0,$ then the case (iii) is true.
If $\gamma\neq 0,$ for the cases (iv) the system Eq.(3.33) holds.
\end{proof}

\begin{thm}
$(G_{7}, g)$ is the algebraic Schouten soliton associated to the connection $\nabla$ if and only if $\gamma=0,$ $c=0.$
\end{thm}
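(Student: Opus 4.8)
The plan is to follow the same scheme used for $(G_1,g)$ through $(G_6,g)$. First I would record the Ricci operator of $(G_7,g)$ from \cite{Batat} as a $3\times 3$ matrix in the pseudo-orthonormal basis $\{e_1,e_2,e_3\}$, read off the diagonal entries $\rho(e_i,e_i)$, and compute the scalar curvature through $s=\rho(e_1,e_1)+\rho(e_2,e_2)-\rho(e_3,e_3)$ from Eq.(3.6). The soliton equation Eq.(3.7) then determines the candidate derivation $D={\rm Ric}-(s\lambda_0+c){\rm Id}$, and I would write out $De_1$, $De_2$, $De_3$ explicitly, the diagonal part being shifted by the common scalar $s\lambda_0+c$.

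The heart of the argument is to impose the derivation identity Eq.(3.8) on each of the three defining brackets of $\mathfrak{g}_7$. I would exploit the structural relation $[e_1,e_3]=-[e_1,e_2]$ visible in the presentation of $\mathfrak{g}_7$: applying $D$ to both sides and comparing should immediately tie together the equations coming from the pairs $(1,2)$ and $(1,3)$, cutting the redundancy in the system. For each of the pairs $(1,2)$, $(1,3)$, $(2,3)$ I would expand $D[e_i,e_j]$ and $[De_i,e_j]+[e_i,De_j]$ and equate the coefficients of $e_1$, $e_2$, $e_3$. This yields a polynomial system in $\alpha,\beta,\gamma,\delta,\lambda_0,c$.

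To close the argument I would bring in the two standing hypotheses $\alpha+\delta\neq 0$ and $\alpha\gamma=0$. The constraint $\alpha\gamma=0$ forces a case split into $\alpha=0$ and $\gamma=0$; in each branch I would solve the reduced system, using $\alpha+\delta\neq 0$ to discard the degenerate possibilities, and aim to show that every admissible solution forces $\gamma=0$ together with $c=0$. Conversely, substituting $\gamma=0$ and $c=0$ back into the expression for $D$ and checking Eq.(3.8) on all three brackets confirms that $D$ is a genuine derivation; this direction should be routine once the matrix of $D$ is in hand.

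The main obstacle I anticipate is the bookkeeping in the derivation step. Because every bracket of $\mathfrak{g}_7$ mixes all three basis vectors, the expansions of $[De_i,e_j]+[e_i,De_j]$ produce many cross terms, and it is easy to mishandle the uniform shift $s\lambda_0+c$ sitting on the diagonal of $D$. Organizing the coefficient comparisons by basis vector, and using $\alpha\gamma=0$ as early as possible to annihilate terms, should be the key to keeping the algebra under control and extracting the clean conclusion $\gamma=0$, $c=0$.
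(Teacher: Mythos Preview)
Your plan is correct and matches the paper's approach: record the Ricci operator from \cite{Batat}, compute $s=-\tfrac{1}{2}\gamma^{2}$, write $D={\rm Ric}-(s\lambda_{0}+c){\rm Id}$, and reduce Eq.(3.8) on the three brackets to a small polynomial system. The paper's only shortcut over your outline is that it does not case-split on $\alpha\gamma=0$; instead it uses $\alpha+\delta\neq 0\Rightarrow \alpha^{2}+\delta^{2}\neq 0$ to cancel a factor in one of the resulting equations, after which comparing with the remaining equation forces $\gamma=0$ and then $c=0$ directly.
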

\begin{proof}
From \cite{Batat}, we have
\begin{align}
{\rm Ric}\left(\begin{array}{c}
e_{1}\\
e_{2}\\
e_{3}
\end{array}\right)=\left(\begin{array}{ccc}
\frac{1}{2}\gamma^{2}&0&0\\
0&\alpha^{2}-\alpha\delta+\beta\gamma-\frac{1}{2}\gamma^{2}&\alpha^{2}-\alpha\delta+\beta\gamma\\
0&-\alpha^{2}+\alpha\delta-\beta\gamma&-\alpha^{2}+\alpha\delta-\beta\gamma-\frac{1}{2}\gamma^{2}
\end{array}\right)\left(\begin{array}{c}
e_{1}\\
e_{2}\\
e_{3}
\end{array}\right).
\end{align}
Then $s=-\frac{1}{2}\gamma^{2}.$
Computations show that
\begin{align}
\left\{\begin{array}{l}
De_{1}=(\frac{1}{2}\gamma^{2}+\frac{1}{2}\gamma^{2}\lambda_{0}-c)e_{1},\\
De_{2}=(\alpha^{2}-\alpha\delta+\beta\gamma-\frac{1}{2}\gamma^{2}+\frac{1}{2}\gamma^{2}\lambda_{0}-c)e_{2}+(\alpha^{2}-\alpha\delta+\beta\gamma)e_{3},\\
De_{3}=(-\alpha^{2}+\alpha\delta-\beta\gamma)e_{2}+(-\alpha^{2}+\alpha\delta-\beta\gamma-\frac{1}{2}\gamma^{2}+\frac{1}{2}\gamma^{2}\lambda_{0}-c)e_{3}.\\
\end{array}\right.
\end{align}
Hence, Eq.(3.8) now yields
\begin{align}
\left\{\begin{array}{l}
(\alpha^{2}+\delta^{2})(\frac{1}{2}\gamma^{2}-\frac{1}{2}\gamma^{2}\lambda_{0}+c)=0,\\
\beta(\frac{1}{2}\gamma^{2}+\frac{1}{2}\gamma^{2}\lambda_{0}-c)=0,\\
\gamma(\frac{3}{2}\gamma^{2}-\frac{1}{2}\gamma^{2}\lambda_{0}+c)=0.\\
\end{array}\right.
\end{align}
Since $\alpha\gamma=0,$ $\alpha+\delta\neq 0,$ we get $\gamma=0$ and $c=0.$
\end{proof}
\vskip 1 true cm

\section{ Algebraic Schouten Solitons associated to canonical connections and Kobayashi-Nomizu connections on three-dimensional Lorentzian Lie groups}

We define a product structure $J$ on $G_{i}$ by
\begin{equation}
Je_{1}=e_{1},~Je_{2}=e_{2},~Je_{3}=-e_{3},
\end{equation}
then $J^2={\rm id}$ and $g(Je_{j}, Je_{j})=g(e_{j}, e_{j})$.
By \cite{Wang1}, we define the canonical connection and the Kobayashi-Nomizu connection as follows:
\begin{equation}
\nabla^{0}_{X}Y=\nabla_{X}Y-\frac{1}{2}(\nabla_{X}J)JY,
\end{equation}
\begin{equation}
\nabla^{1}_{X}Y=\nabla^{0}_{X}Y-\frac{1}{4}[(\nabla_{Y}J)JX-(\nabla_{JY}J)X].
\end{equation}
We define\begin{equation}
R^{0}(X, Y)Z=\nabla^{0}_{X}\nabla^{0}_{Y}Z-\nabla^{0}_{Y}\nabla^{0}_{X}Z-\nabla^{0}_{[X,Y]}Z,
\end{equation}
\begin{equation}
R^{1}(X, Y)Z=\nabla^{1}_{X}\nabla^{1}_{Y}Z-\nabla^{1}_{Y}\nabla^{1}_{X}Z-\nabla^{1}_{[X,Y]}Z.
\end{equation}
The Ricci tensors of $(G_i,g)$ associated to the canonical connection and the Kobayashi-Nomizu connection
are defined by
\begin{equation}
\rho^{0}(X, Y)=-g(R^{0}(X, e_{1})Y, e_{1})-g(R^{0}(X, e_{2})Y, e_{2})+g(R^{0}(X, e_{3})Y, e_{3}),
\end{equation}
\begin{equation}
\rho^{1}(X, Y)=-g(R^{1}(X, e_{1})Y, e_{1})-g(R^{1}(X, e_{2})Y, e_{2})+g(R^{1}(X, e_{3})Y, e_{3}).
\end{equation}
 The Ricci operators ${\rm Ric}^{0}$ and ${\rm Ric}^{1}$ is given by
\begin{equation}
\rho^{0}(X, Y)=g({\rm Ric}^{0}(X), Y),~~\rho^{1}(X, Y)=g({\rm Ric}^{1}(X), Y).
\end{equation}
Let
\begin{equation}
\widetilde{\rho}^{0}(X, Y)=\frac{{\rho}^{0}(X, Y)+{\rho}^{0}(Y, X)}{2},~~\widetilde{\rho}^{1}(X, Y)=\frac{{\rho}^{1}(X, Y)+{\rho}^{1}(Y, X)}{2},
\end{equation}
and
\begin{equation}
\widetilde{\rho}^{0}(X, Y)=g(\widetilde{{\rm Ric}}^{0}(X), Y),~~\widetilde{\rho}^{1}(X, Y)=g(\widetilde{{\rm Ric}}^{1}(X), Y).
\end{equation}
Similar to the formulae (3.5) and (3.6), we have
\begin{equation}
S^{0}(e_{i}, e_{j})=\widetilde{\rho}^{0}(e_{i}, e_{j})-s^{0} \lambda_{0} g(e_{i}, e_{j}),~~S^{1}(e_{i}, e_{j})=\widetilde{\rho}^{1}(e_{i}, e_{j})-s^{1} \lambda_{0} g(e_{i}, e_{j}),
\end{equation}
and
\begin{equation}
s^{0}=\widetilde{\rho}^{0}(e_{1}, e_{1})+\widetilde{\rho}^{0}(e_{2}, e_{2})-\widetilde{\rho}^{0}(e_{3}, e_{3}),~~s^{1}=\widetilde{\rho}^{1}(e_{1}, e_{1})+\widetilde{\rho}^{1}(e_{2}, e_{2})-\widetilde{\rho}^{1}(e_{3}, e_{3}).
\end{equation}

\begin{defn}
$(G_{i}, g, J)$ is called the algebraic Schouten soliton associated to the connection $\nabla^{0}$ if it satisfies
\begin{equation}
\widetilde{{\rm Ric}}^{0}=(s^{0} \lambda_{0}+c){\rm Id}+D,
\end{equation}
where $c$ is a real number, and $D$ is a derivation of $\mathfrak{g}$, that is
\begin{equation}
D[X, Y]=[DX, Y]+[X, DY]~~{\rm for }~ X, Y\in \mathfrak{g}.
\end{equation}
$(G_i,g,J)$ is called the algebraic Schouten soliton associated to the connection $\nabla^{1}$ if it satisfies
\begin{equation}
\widetilde{{\rm Ric}}^{1}=(s^{1} \lambda_{0}+c){\rm Id}+D.
\end{equation}
\end{defn}
\vskip 0.5 true cm

\begin{thm}
$(G_{1}, g, J)$ is the algebraic Schouten soliton associated to the connection $\nabla^{0}$ if and only if $\beta=0,$ $c=-\frac{1}{2}\alpha^{2}+2\alpha^{2}\lambda_{0}.$
\end{thm}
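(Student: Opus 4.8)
The plan is to follow verbatim the template of Section~3, replacing the Levi-Civita data by the canonical-connection data. First I would record the Levi-Civita connection $\nabla$ of $G_1$ in the pseudo-orthonormal basis $\{e_1,e_2,e_3\}$; the components $\nabla_{e_i}e_j$ are the same ones underlying the Ricci operator (3.9) cited from \cite{Batat}, so they can be read off (or recomputed from Koszul's formula and the brackets of $\mathfrak{g}_1$). Next I would evaluate the structure tensor $(\nabla_{e_i}J)e_j=\nabla_{e_i}(Je_j)-J(\nabla_{e_i}e_j)$ for the product structure $Je_1=e_1$, $Je_2=e_2$, $Je_3=-e_3$ of (4.1). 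Because $J$ flips only $e_3$, this tensor is supported on the terms mixing the $e_3$-direction with the spacelike ones, and substituting it into $\nabla^0_{e_i}e_j=\nabla_{e_i}e_j-\tfrac12(\nabla_{e_i}J)Je_j$ from (4.2) produces the canonical connection explicitly.

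With $\nabla^0$ in hand, I would compute the curvature $R^0(e_i,e_j)e_k$ from (4.4), contract it with the timelike sign convention of (4.6) to obtain $\rho^0$, and then symmetrize via (4.9) to get $\widetilde{\rho}^0$ and hence the operator $\widetilde{{\rm Ric}}^0$. I expect $\widetilde{{\rm Ric}}^0$ to be a $3\times 3$ matrix whose diagonal entries are polynomials in $\alpha^2$ and $\beta^2$ and whose off-diagonal entries are multiples of $\alpha\beta$, in close analogy with (3.9). From (4.12) the scalar curvature $s^0$ is then the combination $\widetilde{\rho}^0(e_1,e_1)+\widetilde{\rho}^0(e_2,e_2)-\widetilde{\rho}^0(e_3,e_3)$; when $\beta=0$ I anticipate $s^0=-2\alpha^2$ and $(\widetilde{{\rm Ric}}^0)_{11}=-\tfrac12\alpha^2$, which already predicts the stated value $c=-\tfrac12\alpha^2+2\alpha^2\lambda_0$ via $s^0\lambda_0+c=(\widetilde{{\rm Ric}}^0)_{11}$.

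Finally I would set $D=\widetilde{{\rm Ric}}^0-(s^0\lambda_0+c){\rm Id}$ and impose the derivation identity (4.14) on the three brackets of $\mathfrak{g}_1$, namely $[e_1,e_2]=\alpha e_1-\beta e_3$, $[e_1,e_3]=-\alpha e_1-\beta e_2$, and $[e_2,e_3]=\beta e_1+\alpha e_2+\alpha e_3$. Collecting the coefficients of $e_1,e_2,e_3$ in each of the three equations $D[e_i,e_j]=[De_i,e_j]+[e_i,De_j]$ yields a system analogous to (3.11). The main obstacle is organizing this system so that the hypothesis $\alpha\neq 0$ can be applied cleanly: I expect one of the equations to force $\alpha\beta=0$, hence $\beta=0$, after which the off-diagonal entries of $\widetilde{{\rm Ric}}^0$ vanish, the remaining diagonal equations become consistent, and they pin down $c=-\tfrac12\alpha^2+2\alpha^2\lambda_0$. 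The genuinely delicate points are the sign bookkeeping in the curvature contraction (since $e_3$ is timelike) and the symmetrization of $\rho^0$, because the canonical-connection Ricci tensor need not be symmetric; an error in either step would corrupt both $s^0$ and the final value of $c$, so I would cross-check the $\beta=0$ specialization against the predicted $s^0=-2\alpha^2$ before concluding.
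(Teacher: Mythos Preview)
Your overall plan is exactly the paper's: write down $\widetilde{\rm Ric}^{0}$ (the paper simply cites it from \cite{Azami} rather than recomputing), form $D=\widetilde{\rm Ric}^{0}-(s^{0}\lambda_{0}+c){\rm Id}$, impose the derivation identity on the three brackets of $\mathfrak{g}_{1}$, and solve the resulting system; the equation $\alpha^{2}\beta=0$ appears and, with $\alpha\neq0$, forces $\beta=0$ and then $c=-\tfrac12\alpha^{2}+2\alpha^{2}\lambda_{0}$.

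However, two of your anticipations about the shape of $\widetilde{\rm Ric}^{0}$ are wrong and would derail your proposed cross-check. The actual operator is
\[
\widetilde{\rm Ric}^{0}=
\begin{pmatrix}
-(\alpha^{2}+\tfrac12\beta^{2}) & 0 & -\tfrac14\alpha\beta\\
0 & -(\alpha^{2}+\tfrac12\beta^{2}) & -\tfrac12\alpha^{2}\\
\tfrac14\alpha\beta & \tfrac12\alpha^{2} & 0
\end{pmatrix},
\]
so when $\beta=0$ the $(1,1)$ entry is $-\alpha^{2}$, not $-\tfrac12\alpha^{2}$, and the $(2,3)$ and $(3,2)$ entries are $\mp\tfrac12\alpha^{2}$, which do \emph{not} vanish. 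Thus $D$ is never diagonal here, contrary to what you expect, and your heuristic ``$s^{0}\lambda_{0}+c=(\widetilde{\rm Ric}^{0})_{11}$'' is not the mechanism fixing $c$: it only happened to give the right number because you combined it with a wrong guess for $(\widetilde{\rm Ric}^{0})_{11}$. The correct determination of $c$ comes from the derivation condition on $[e_{1},e_{2}]=\alpha e_{1}$, which at $\beta=0$ reads $\alpha(\tfrac12\alpha^{2}-2\alpha^{2}\lambda_{0}+c)=0$; the surviving off-diagonal term $\tfrac12\alpha^{2}$ in $D$ feeds into this via $[e_{1},De_{2}]$ and is what produces the coefficient $\tfrac12\alpha^{2}$ rather than $\alpha^{2}$. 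If you actually carry out the computation you propose, you will see this; just do not trust the structural shortcut.
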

\begin{proof}
From \cite{Azami}, it is obvious that
\begin{align}
\widetilde{{\rm Ric}}^{0}\left(\begin{array}{c}
e_{1}\\
e_{2}\\
e_{3}
\end{array}\right)=\left(\begin{array}{ccc}
-(\alpha^{2}+\frac{1}{2}\beta^{2})&0&-\frac{1}{4}\alpha\beta\\
0&-(\alpha^{2}+\frac{1}{2}\beta^{2})&-\frac{1}{2}\alpha^{2}\\
\frac{1}{4}\alpha\beta&\frac{1}{2}\alpha^{2}&0
\end{array}\right)\left(\begin{array}{c}
e_{1}\\
e_{2}\\
e_{3}
\end{array}\right).
\end{align}
And $s^{0}=-2(\alpha^{2}+\frac{1}{2}\beta^{2}).$
We obtain that
\begin{align}
\left\{\begin{array}{l}
De_{1}=-(\alpha^{2}+\frac{1}{2}\beta^{2}-2(\alpha^{2}+\frac{1}{2}\beta^{2})\lambda_{0}+c)e_{1}-\frac{1}{4}\alpha\beta e_{3},\\
De_{2}=-(\alpha^{2}+\frac{1}{2}\beta^{2}-2(\alpha^{2}+\frac{1}{2}\beta^{2})\lambda_{0}+c)e_{2}-\frac{1}{2}\alpha^{2}e_{3},\\
De_{3}=\frac{1}{4}\alpha\beta e_{1}+\frac{1}{2}\alpha^{2}e_{2}-(-2(\alpha^{2}+\frac{1}{2}\beta^{2})\lambda_{0}+c)e_{3}.\\
\end{array}\right.
\end{align}
Then, Equation (4.14) becomes
\begin{align}
\left\{\begin{array}{l}
\alpha^{2}\beta=0,\\
\alpha(\frac{1}{2}\alpha^{2}-2(\alpha^{2}+\frac{1}{2}\beta^{2})\lambda_{0}+c)=0,\\
\beta(\frac{5}{2}\alpha^{2}+\beta^{2}-2(\alpha^{2}+\frac{1}{2}\beta^{2})\lambda_{0}+c)=0,\\
\beta(-2(\alpha^{2}+\frac{1}{2}\beta^{2})\lambda_{0}+c)=0,\\
\beta(\frac{1}{2}\alpha^{2}-2(\alpha^{2}+\frac{1}{2}\beta^{2})\lambda_{0}+c)=0.\\
\end{array}\right.
\end{align}
Taking into account that, $\alpha\neq0,$ we get $\beta=0$ and $c=-\frac{1}{2}\alpha^{2}+2\alpha^{2}\lambda_{0}.$
\end{proof}

\begin{thm}
$(G_{1}, g, J)$ is the algebraic Schouten soliton associated to the connection $\nabla^{1}$ if and only if $\beta=0,$ $c=-\frac{1}{2}\alpha^{2}+2\alpha^{2}\lambda_{0}.$
\end{thm}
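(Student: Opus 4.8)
The plan is to follow the same template used in Theorem 4.2 for the canonical connection, since the asserted conclusion for $\nabla^1$ is word-for-word the conclusion obtained there for $\nabla^0$. First I would record the symmetrized Ricci operator $\widetilde{\mathrm{Ric}}^1$ of the Kobayashi-Nomizu connection on $G_1$ in the pseudo-orthonormal basis $\{e_1,e_2,e_3\}$, reading it off from \cite{Azami} exactly as $\widetilde{\mathrm{Ric}}^0$ was read off in (4.16). The key preliminary observation I expect to make is that the Kobayashi-Nomizu correction term $-\frac{1}{4}[(\nabla_Y J)JX-(\nabla_{JY}J)X]$ in (4.3) contributes nothing to the \emph{symmetrized} Ricci operator on $\mathfrak{g}_1$, so that $\widetilde{\mathrm{Ric}}^1=\widetilde{\mathrm{Ric}}^0$ and consequently $s^1=s^0=-2(\alpha^2+\frac{1}{2}\beta^2)$. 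If that coincidence holds, the whole argument collapses onto the computation already carried out in Theorem 4.2.

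Granting this, I would next form the derivation $D=\widetilde{\mathrm{Ric}}^1-(s^1\lambda_0+c)\mathrm{Id}$ via the soliton condition (4.15), obtaining the same block expressions for $De_1,De_2,De_3$ as in (4.17), with the off-diagonal entries $-\frac14\alpha\beta$, $-\frac12\alpha^2$ and their transposes intact. I would then impose the derivation identity (4.14), namely $D[e_i,e_j]=[De_i,e_j]+[e_i,De_j]$, on each of the three brackets of $\mathfrak{g}_1$: $[e_1,e_2]=\alpha e_1-\beta e_3$, $[e_1,e_3]=-\alpha e_1-\beta e_2$, and $[e_2,e_3]=\beta e_1+\alpha e_2+\alpha e_3$. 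Collecting the coefficients of $e_1,e_2,e_3$ should reproduce the system (4.18), whose leading equation is $\alpha^2\beta=0$.

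Finally, I would solve this system under the standing hypothesis $\alpha\neq 0$ attached to $\mathfrak{g}_1$. From $\alpha^2\beta=0$ one gets $\beta=0$; substituting $\beta=0$ into the equation $\alpha(\frac12\alpha^2-2(\alpha^2+\frac12\beta^2)\lambda_0+c)=0$ and dividing by $\alpha$ yields $c=-\frac12\alpha^2+2\alpha^2\lambda_0$, after which the remaining equations of (4.18) are satisfied identically. The converse direction is immediate by back-substitution of $\beta=0$ and this value of $c$.

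The step I expect to be the main obstacle is justifying the coincidence $\widetilde{\mathrm{Ric}}^1=\widetilde{\mathrm{Ric}}^0$: if the two symmetrized Ricci operators should in fact differ, the shortcut is unavailable and I would instead have to compute $\nabla^1$ directly from (4.3), differentiate to get $R^1$, contract to obtain $\rho^1$ and then symmetrize, and only afterwards set up the derivation system. Even in that longer route the decisive and potentially delicate step remains extracting a clean solvable system from the derivation identity and exploiting $\alpha\neq 0$ to force $\beta=0$.
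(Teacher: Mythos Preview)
Your hoped-for shortcut fails: on $\mathfrak{g}_1$ the symmetrized Ricci operators of the two connections are genuinely different. The paper (reading off from \cite{Azami}) records
\[
\widetilde{\mathrm{Ric}}^{1}=
\begin{pmatrix}
-(\alpha^{2}+\beta^{2}) & \alpha\beta & \tfrac{1}{2}\alpha\beta\\
\alpha\beta & -(\alpha^{2}+\beta^{2}) & -\tfrac{1}{2}\alpha^{2}\\
-\tfrac{1}{2}\alpha\beta & \tfrac{1}{2}\alpha^{2} & 0
\end{pmatrix},
\qquad s^{1}=-2(\alpha^{2}+\beta^{2}),
\]
which differs from $\widetilde{\mathrm{Ric}}^{0}$ in (4.16) both in the diagonal entries (factor $\beta^{2}$ vs.\ $\tfrac12\beta^{2}$) and in the off-diagonal pattern (a new $\alpha\beta$ term in the $(1,2)$ slot, and the sign and coefficient in the $(1,3)$ slot change). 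So the Kobayashi--Nomizu correction in (4.3) does contribute to the symmetrized Ricci tensor here.

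That said, your fallback plan is exactly what the paper does, and it goes through with no extra difficulty. One forms $D=\widetilde{\mathrm{Ric}}^{1}-(s^{1}\lambda_{0}+c)\mathrm{Id}$, imposes the derivation identity on the three brackets of $\mathfrak{g}_1$, and obtains a system whose first equation is again $\alpha^{2}\beta=0$; the remaining equations differ from (4.18) in their $\beta$-dependent terms but collapse identically once $\beta=0$ is forced by $\alpha\neq 0$. The equation surviving after $\beta=0$ is $\alpha\bigl(\tfrac12\alpha^{2}-2\alpha^{2}\lambda_{0}+c\bigr)=0$, giving $c=-\tfrac12\alpha^{2}+2\alpha^{2}\lambda_{0}$, and the converse is immediate. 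So the structure of your argument is right; only the specific entries of $\widetilde{\mathrm{Ric}}^{1}$ and $s^{1}$ need to be replaced by the correct ones above.
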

\begin{proof}
In \cite{Azami}, it is shown that
\begin{align}
\widetilde{{\rm Ric}}^{1}\left(\begin{array}{c}
e_{1}\\
e_{2}\\
e_{3}
\end{array}\right)=\left(\begin{array}{ccc}
-(\alpha^{2}+\beta^{2})&\alpha\beta&\frac{1}{2}\alpha\beta\\
\alpha\beta&-(\alpha^{2}+\beta^{2})&-\frac{1}{2}\alpha^{2}\\
-\frac{1}{2}\alpha\beta&\frac{1}{2}\alpha^{2}&0
\end{array}\right)\left(\begin{array}{c}
e_{1}\\
e_{2}\\
e_{3}
\end{array}\right).
\end{align}
Therefore $s^{1}=-2(\alpha^{2}+\beta^{2}).$
$D$ is described by
\begin{align}
\left\{\begin{array}{l}
De_{1}=-(\alpha^{2}+\beta^{2}-2(\alpha^{2}+\beta^{2})\lambda_{0}+c)e_{1}+\alpha\beta e_{2}+\frac{1}{2}\alpha\beta e_{3},\\
De_{2}=\alpha\beta e_{1}-(\alpha^{2}+\beta^{2}-2(\alpha^{2}+\beta^{2})\lambda_{0}+c)e_{2}-\frac{1}{2}\alpha^{2}e_{3},\\
De_{3}=-\frac{1}{2}\alpha\beta e_{1}+\frac{1}{2}\alpha^{2}e_{2}-(-2(\alpha^{2}+\beta^{2})\lambda_{0}+c)e_{3}.\\
\end{array}\right.
\end{align}
We calculate that
\begin{align}
\left\{\begin{array}{l}
\alpha^{2}\beta=0,\\
\alpha(\frac{1}{2}\alpha^{2}+2\beta^{2}-2(\alpha^{2}+\beta^{2})\lambda_{0}+c)=0,\\
\beta(\alpha^{2}+2\beta^{2}-2(\alpha^{2}+\beta^{2})\lambda_{0}+c)=0,\\
\beta(\alpha^{2}-2(\alpha^{2}+\beta^{2})\lambda_{0}+c)=0.\\
\end{array}\right.
\end{align}
Note that, $\alpha\neq0,$ we get $\beta=0$ and $c=-\frac{1}{2}\alpha^{2}+2\alpha^{2}\lambda_{0}.$
\end{proof}

\begin{thm}
$(G_{2}, g, J)$ is the algebraic Schouten soliton associated to the connection $\nabla^{0}$ if and only if $\alpha=\beta=0,$ $c=-\gamma^{2}+2\gamma^{2}\lambda_{0}.$
\end{thm}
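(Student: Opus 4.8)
The plan is to follow the template already used for $G_1$ in Theorems 4.1 and 4.2, now adapted to the bracket relations of $\mathfrak{g}_2$. First I would record the symmetrized Ricci operator $\widetilde{{\rm Ric}}^{0}$ of $(G_2,g,J)$ associated to the canonical connection $\nabla^0$, as computed in \cite{Azami}: this comes from passing from the Levi-Civita connection to $\nabla^0$ via (4.2), forming the curvature $R^0$ via (4.4), contracting to $\rho^0$ via (4.6), and finally symmetrizing through (4.9)--(4.10). With the matrix of $\widetilde{{\rm Ric}}^{0}$ in hand, I would read off the diagonal values $\widetilde{\rho}^0(e_i,e_i)$ and compute the scalar curvature $s^0$ from (4.12).

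Next I would use the defining equation (4.13) to write the candidate derivation $D = \widetilde{{\rm Ric}}^{0} - (s^0\lambda_0 + c)\,{\rm Id}$ explicitly as $De_1, De_2, De_3$, exactly as was done for $G_1$ in the preceding theorems. The heart of the argument is then to impose the derivation condition (4.14) on each of the three structural brackets of $\mathfrak{g}_2$, namely $[e_1,e_2]=\gamma e_2-\beta e_3$, $[e_1,e_3]=-\beta e_2-\gamma e_3$, and $[e_2,e_3]=\alpha e_1$. Each bracket produces one vector identity $D[e_i,e_j]=[De_i,e_j]+[e_i,De_j]$, and matching the coefficients of $e_1,e_2,e_3$ on both sides converts the three identities into a polynomial system in $\alpha,\beta,\gamma,\lambda_0,c$.

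Finally I would reduce this system. The classification in Theorem 2.1 guarantees $\gamma\neq 0$ for $\mathfrak{g}_2$, so any equation carrying an overall factor that contains $\gamma^2$ (or $\beta^2+\gamma^2$, as in the Levi-Civita reduction of Theorem 3.2) cannot have that factor vanish, and I expect the cross-term equations arising from the $e_2,e_3$ mixing in the first two brackets to force $\beta=0$ and then $\alpha=0$. Once $\alpha=\beta=0$ is established, the remaining diagonal equation should pin down $c=-\gamma^2+2\gamma^2\lambda_0$, and conversely one checks directly that these values make $D$ a derivation, giving the stated equivalence.

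The main obstacle I anticipate is computational bookkeeping rather than a conceptual difficulty: the derivation condition on the two ``rotational'' brackets $[e_1,e_2]$ and $[e_1,e_3]$ couples $\alpha$, $\beta$, $\gamma$ nonlinearly, and one must track carefully which linear combinations of the resulting equations isolate the factor that cannot vanish under $\gamma\neq0$. Extra care is needed because $\widetilde{{\rm Ric}}^0$ need not be diagonal — as the $G_1$ case shows, its off-diagonal entries feed directly into the coefficient matching — so the clean reduction to $\alpha=\beta=0$, $c=-\gamma^2+2\gamma^2\lambda_0$ depends on verifying that the surviving off-diagonal conditions are genuinely forced to vanish and that the candidate $D$ satisfies (4.14), not merely the symmetry built into $\widetilde{{\rm Ric}}^0$.
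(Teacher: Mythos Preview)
Your proposal is correct and follows essentially the same approach as the paper: record $\widetilde{{\rm Ric}}^{0}$ from \cite{Azami}, compute $s^0=-(\alpha\beta+2\gamma^2)$, write out $D$, impose (4.14) on the three brackets of $\mathfrak g_2$, and reduce the resulting system using $\gamma\neq0$. The only point to watch during the reduction is that subtracting two of the equations yields $\beta(\alpha\beta+2\gamma^2)=0$, so besides the branch $\beta=0$ (which then forces $\alpha=0$ and the stated $c$) you must briefly rule out $\alpha\beta=-2\gamma^2$; this dies quickly since it makes $s^0=0$, whence $c=0$ from the $\alpha$-equation and then the $\gamma$-equation gives $\beta^2+\gamma^2=0$, a contradiction.
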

\begin{proof}
According to \cite{Azami}, we have
\begin{align}
\widetilde{{\rm Ric}}^{0}\left(\begin{array}{c}
e_{1}\\
e_{2}\\
e_{3}
\end{array}\right)=\left(\begin{array}{ccc}
-(\frac{1}{2}\alpha\beta+\gamma^2)&0&0\\
0&-(\frac{1}{2}\alpha\beta+\gamma^2)&\frac{1}{4}\alpha\gamma-\frac{1}{2}\beta\gamma\\
0&-\frac{1}{4}\alpha\gamma+\frac{1}{2}\beta\gamma&0
\end{array}\right)\left(\begin{array}{c}
e_{1}\\
e_{2}\\
e_{3}
\end{array}\right).
\end{align}
Obviously, $s^{0}=-(\alpha\beta+2\gamma^2).$
From Equation (4.13) it is easy to get that
\begin{align}
\left\{\begin{array}{l}
De_{1}=-(\frac{1}{2}\alpha\beta+\gamma^2-(\alpha\beta+2\gamma^2)\lambda_{0}+c)e_{1},\\
De_{2}=-(\frac{1}{2}\alpha\beta+\gamma^2-(\alpha\beta+2\gamma^2)\lambda_{0}+c)e_{2}+(\frac{1}{4}\alpha\gamma-\frac{1}{2}\beta\gamma)e_{3},\\
De_{3}=(-\frac{1}{4}\alpha\gamma+\frac{1}{2}\beta\gamma)e_{2}-(-(\alpha\beta+2\gamma^2)\lambda_{0}+c)e_{3}.\\
\end{array}\right.
\end{align}
Consequently, we obtain
\begin{align}
\left\{\begin{array}{l}
\gamma(\alpha\beta-\beta^{2}+\gamma^{2}-(\alpha\beta+2\gamma^2)\lambda_{0}+c)=0,\\
\beta(\alpha\beta+2\gamma^{2}-(\alpha\beta+2\gamma^2)\lambda_{0}+c)+\gamma(-\frac{1}{2}\alpha\gamma+\beta\gamma)=0,\\
\beta(-(\alpha\beta+2\gamma^2)\lambda_{0}+c)+\gamma(-\frac{1}{2}\alpha\gamma+\beta\gamma)=0,\\
\alpha(-(\alpha\beta+2\gamma^2)\lambda_{0}+c)=0.\\
\end{array}\right.
\end{align}
The second and third equations in (4.24) transforms into
\begin{align}
\beta(\alpha\beta+2\gamma^{2})=0.
\end{align}
Then, we get
\begin{align}
\left\{\begin{array}{l}
\gamma(\alpha^{2}+\alpha\beta-\beta^{2}-(\alpha\beta+2\gamma^2)\lambda_{0}+c)=0,\\
\beta(\alpha\beta+2\gamma^{2})=0,\\
\alpha(-(\alpha\beta+2\gamma^2)\lambda_{0}+c)=0.\\
\end{array}\right.
\end{align}
Note that, $\gamma\neq0.$
If $\alpha=0,$ then $\beta=0,$ $c=-\gamma^{2}+2\gamma^{2}\lambda_{0}.$
\end{proof}

\begin{thm}
$(G_{2}, g, J)$ is the algebraic Schouten soliton associated to the connection $\nabla^{1}$ if and only if $\alpha=\beta=0,$ $c=-\gamma^{2}+2\gamma^{2}\lambda_{0}.$
\end{thm}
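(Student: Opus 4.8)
The plan is to reuse the template established in Theorems 4.2--4.4, now for the Kobayashi-Nomizu connection on $G_{2}$. First I would record the symmetrized Ricci operator $\widetilde{{\rm Ric}}^{1}$ of $(G_{2},g,J)$: by \cite{Azami} this is an explicit matrix in the pseudo-orthonormal basis $\{e_{1},e_{2},e_{3}\}$ with entries quadratic in $\alpha,\beta,\gamma$, obtained by feeding the bracket relations of $\mathfrak{g}_{2}$ through (4.3), (4.5), (4.7) and (4.9)--(4.10). Guided by the $G_{2}$ calculation for $\nabla^{0}$ in Theorem 4.4 and by the $G_{1}$ comparison between Theorems 4.2 and 4.3, I expect the diagonal of $\widetilde{{\rm Ric}}^{1}$ to again be controlled by $\alpha\beta$ and $\gamma^{2}$, with the Kobayashi-Nomizu correction term in (4.3) modifying only the off-diagonal $(2,3)$ and $(3,2)$ entries. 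I would then read off the scalar curvature $s^{1}=\widetilde{\rho}^{1}(e_{1},e_{1})+\widetilde{\rho}^{1}(e_{2},e_{2})-\widetilde{\rho}^{1}(e_{3},e_{3})$ from (4.12).

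With $\widetilde{{\rm Ric}}^{1}$ and $s^{1}$ in hand, I would form the candidate derivation $D=\widetilde{{\rm Ric}}^{1}-(s^{1}\lambda_{0}+c){\rm Id}$ via (4.15), writing $De_{1},De_{2},De_{3}$ column by column exactly as in (4.23). The core step is then to impose the derivation identity (4.14) on the three brackets $[e_{1},e_{2}]=\gamma e_{2}-\beta e_{3}$, $[e_{1},e_{3}]=-\beta e_{2}-\gamma e_{3}$ and $[e_{2},e_{3}]=\alpha e_{1}$ of $\mathfrak{g}_{2}$; matching the coefficients of $e_{1},e_{2},e_{3}$ on each side produces a system of scalar equations in $\alpha,\beta,\gamma,\lambda_{0},c$. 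By analogy with (4.24) I anticipate that two of these equations combine to eliminate the $\lambda_{0}$ and $c$ terms and leave a factored relation of the form $\beta(\alpha\beta+2\gamma^{2})=0$, as in (4.25).

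Finally, since $\gamma\neq 0$ for $\mathfrak{g}_{2}$, I would run the same case analysis as in Theorem 4.4. The factored relation together with the $\alpha$-equation (the analogue of the last line of (4.26)) forces $\alpha=\beta=0$; substituting back into the surviving diagonal equation then fixes $c=-\gamma^{2}+2\gamma^{2}\lambda_{0}$, and conversely these values clearly satisfy the system. The main obstacle is not conceptual but computational: correctly assembling $\widetilde{{\rm Ric}}^{1}$ for the Kobayashi-Nomizu connection and checking that its off-diagonal correction does not disturb the factorization, so that the soliton condition collapses to exactly the same constraints as in the $\nabla^{0}$ case of Theorem 4.4.
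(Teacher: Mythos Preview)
Your template is the paper's template, and following it through would succeed; but two of your concrete anticipations are wrong and would mislead you if you trusted them. First, the Kobayashi--Nomizu correction does \emph{not} leave the diagonal of $\widetilde{{\rm Ric}}^{1}$ unchanged from the $\nabla^{0}$ case: for $\mathfrak{g}_{2}$ the paper records
\[
\widetilde{{\rm Ric}}^{1}=\begin{pmatrix}-(\beta^{2}+\gamma^{2})&0&0\\0&-(\alpha\beta+\gamma^{2})&\tfrac{1}{2}\alpha\gamma\\0&-\tfrac{1}{2}\alpha\gamma&0\end{pmatrix},
\]
so both the $(1,1)$ and $(2,2)$ entries differ from (4.22), and in particular a $\beta^{2}$ term appears that was absent before. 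Consequently $s^{1}=-(\alpha\beta+\beta^{2}+2\gamma^{2})$, not the $-(\alpha\beta+2\gamma^{2})$ of the canonical case.

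Second, the factored relation you predict, $\beta(\alpha\beta+2\gamma^{2})=0$, does not arise here. In the paper's system (the analogue of your (4.24)) one uses $\gamma\neq 0$ to solve the first equation for $c$ and then substitutes into the second, obtaining $(\alpha-\beta)\gamma^{2}=0$, hence $\alpha=\beta$. The remaining equations (e.g.\ the fourth, which becomes $\alpha(-2\beta^{2}-\gamma^{2})=0$) then force $\alpha=0$, so $\alpha=\beta=0$, and back-substitution gives $c=-\gamma^{2}+2\gamma^{2}\lambda_{0}$. Thus the endpoint matches Theorem~4.4 but the route through the algebra is different; your proposal would recover this once you actually carry out the computation, but the sketch as written misstates both the form of $\widetilde{{\rm Ric}}^{1}$ and the pivotal relation.
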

\begin{proof}
We have
\begin{align}
\widetilde{{\rm Ric}}^{1}\left(\begin{array}{c}
e_{1}\\
e_{2}\\
e_{3}
\end{array}\right)=\left(\begin{array}{ccc}
-(\beta^{2}+\gamma^{2})&0&0\\
0&-(\alpha\beta+\gamma^{2})&\frac{1}{2}\alpha\gamma\\
0&-\frac{1}{2}\alpha\gamma&0
\end{array}\right)\left(\begin{array}{c}
e_{1}\\
e_{2}\\
e_{3}
\end{array}\right),
\end{align}
this can be found in \cite{Azami}.
And $s^{1}=-(\alpha\beta+\beta^{2}+2\gamma^{2}).$
From this, $D$ is given by
\begin{align}
\left\{\begin{array}{l}
De_{1}=-(\beta^{2}+\gamma^{2}-(\alpha\beta+\beta^{2}+2\gamma^{2})\lambda_{0}+c)e_{1},\\
De_{2}=-(\alpha\beta+\gamma^{2}-(\alpha\beta+\beta^{2}+2\gamma^{2})\lambda_{0}+c)e_{2}+\frac{1}{2}\alpha\gamma e_{3},\\
De_{3}=-\frac{1}{2}\alpha\gamma e_{2}-(-(\alpha\beta+\beta^{2}+2\gamma^{2})\lambda_{0}+c)e_{3}.\\
\end{array}\right.
\end{align}
In this way, Eq.(4.14) is satisfied if and only if
\begin{align}
\left\{\begin{array}{l}
\gamma(\alpha\beta+\beta^{2}+\gamma^{2}-(\alpha\beta+\beta^{2}+2\gamma^{2})\lambda_{0}+c)=0,\\
\beta(\alpha\beta+\beta^{2}+2\gamma^{2}-(\alpha\beta+\beta^{2}+2\gamma^{2})\lambda_{0}+c)-\alpha\gamma^{2}=0,\\
\beta(-\alpha\beta+\beta^{2}-(\alpha\beta+\beta^{2}+2\gamma^{2})\lambda_{0}+c)-\alpha\gamma^{2}=0,\\
\alpha(\alpha\beta-\beta^{2}-(\alpha\beta+\beta^{2}+2\gamma^{2})\lambda_{0}+c)=0.\\
\end{array}\right.
\end{align}
Since $\gamma\neq0,$ we get $c=-\alpha\beta-\beta^{2}-\gamma^{2}+(\alpha\beta+\beta^{2}+2\gamma^{2})\lambda_{0}.$
The second equation in (4.29) transforms into
\begin{align}
(\alpha-\beta)\gamma^{2}=0.
\end{align}
We have $\alpha=\beta=0,$ $c=-\gamma^{2}+2\gamma^{2}\lambda_{0}.$
\end{proof}

\begin{thm}
$(G_{3}, g, J)$ is the algebraic Schouten soliton associated to the connection $\nabla^{0}$ if and only if\\
(i) $\alpha=\beta=\gamma=0,$ for all $c,$\\
(ii) $\alpha=\beta=0,$ $\gamma\neq 0,$ $c=\gamma^{2}-\gamma^{2}\lambda_{0},$\\
(iii) $\alpha\neq 0$ or $\beta\neq 0,$ $\gamma=0,$ $c=0,$\\
(iv) $\alpha\neq 0$ or $\beta\neq 0,$ $\gamma=\alpha+\beta,$ $c=0.$
\end{thm}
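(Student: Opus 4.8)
The plan is to follow the template already used for the canonical-connection theorems earlier in this section. First I would record the symmetrized Ricci operator $\widetilde{{\rm Ric}}^{0}$ of $(G_{3},g,J)$, either by quoting \cite{Azami} or by a direct computation from the definition (4.2): using the Levi-Civita data of $\mathfrak{g}_{3}$ written through the constants $a_{1},a_{2},a_{3}$ of (3.17), one checks that $\nabla^{0}$ has only $\nabla^{0}_{e_{3}}e_{1}=a_{3}e_{2}$ and $\nabla^{0}_{e_{3}}e_{2}=-a_{3}e_{1}$ nonzero, so that $R^{0}$ is supported on $R^{0}(e_{1},e_{2})$ and $\widetilde{{\rm Ric}}^{0}$ is diagonal, namely $\widetilde{{\rm Ric}}^{0}={\rm diag}(-\gamma a_{3},-\gamma a_{3},0)$ with $a_{3}=\tfrac12(\alpha+\beta-\gamma)$. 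From (4.12) this yields $s^{0}=-2\gamma a_{3}=\gamma(\gamma-\alpha-\beta)$.

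Next, since $\widetilde{{\rm Ric}}^{0}$ is diagonal, the operator $D=\widetilde{{\rm Ric}}^{0}-(s^{0}\lambda_{0}+c){\rm Id}$ prescribed by (4.13) is again diagonal, $De_{i}=\mu_{i}e_{i}$, with $\mu_{1}=\mu_{2}=-\gamma a_{3}-s^{0}\lambda_{0}-c$ and $\mu_{3}=-s^{0}\lambda_{0}-c$. I would then impose the derivation identity (4.14) on the three brackets $[e_{1},e_{2}]=-\gamma e_{3}$, $[e_{1},e_{3}]=-\beta e_{2}$, $[e_{2},e_{3}]=\alpha e_{1}$ of $\mathfrak{g}_{3}$. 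Because $D$ is diagonal these reduce to $\gamma(\mu_{1}+\mu_{2}-\mu_{3})=0$, $\beta(\mu_{1}+\mu_{3}-\mu_{2})=0$, $\alpha(\mu_{2}+\mu_{3}-\mu_{1})=0$; substituting $\mu_{1}=\mu_{2}$, together with $2\mu_{1}-\mu_{3}=s^{0}(1-\lambda_{0})-c$ and $\mu_{3}=-(s^{0}\lambda_{0}+c)$, the system becomes
\begin{align}
\gamma\big(s^{0}(1-\lambda_{0})-c\big)=0,\qquad \beta\big(s^{0}\lambda_{0}+c\big)=0,\qquad \alpha\big(s^{0}\lambda_{0}+c\big)=0.
\end{align}

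Finally I would run the case analysis on the vanishing of the structure constants. If $\alpha=\beta=\gamma=0$ the algebra is abelian and every diagonal $D$ is a derivation, giving (i) with $c$ arbitrary. If $\alpha=\beta=0$ and $\gamma\neq0$ then $s^{0}=\gamma^{2}$ and only the first equation survives, forcing $c=\gamma^{2}-\gamma^{2}\lambda_{0}$, which is (ii). If instead $\alpha\neq0$ or $\beta\neq0$, I split on $\gamma$: for $\gamma=0$ one has $s^{0}=0$ and the second or third equation gives $c=0$, which is (iii); for $\gamma\neq0$ the first equation gives $c=s^{0}(1-\lambda_{0})$ while an active second or third equation gives $c=-s^{0}\lambda_{0}$, and subtracting forces $s^{0}=0$, hence $\gamma=\alpha+\beta$ and $c=0$, which is (iv). I expect the only delicate point to be this last branch: having $\gamma\neq0$ alongside a nonzero $\alpha$ or $\beta$ over-determines $c$, and consistency holds exactly when $s^{0}=\gamma(\gamma-\alpha-\beta)$ vanishes, i.e. on the hyperplane $\gamma=\alpha+\beta$. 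This is where case (iv) originates, so the value of $s^{0}$ must be computed exactly for the argument to close.
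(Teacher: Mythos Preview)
Your proposal is correct and follows essentially the same approach as the paper: both record $\widetilde{{\rm Ric}}^{0}={\rm diag}(-\gamma a_{3},-\gamma a_{3},0)$, compute $s^{0}=-2\gamma a_{3}$, impose the derivation condition to obtain the three equations $\gamma(s^{0}(1-\lambda_{0})-c)=0$, $\beta(s^{0}\lambda_{0}+c)=0$, $\alpha(s^{0}\lambda_{0}+c)=0$, and then split on the vanishing of $\alpha,\beta,\gamma$. Your case analysis is in fact slightly more explicit than the paper's, which handles the four cases rather tersely.
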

\begin{proof}
By \cite{Azami}, we get
\begin{align}
\widetilde{{\rm Ric}}^{0}\left(\begin{array}{c}
e_{1}\\
e_{2}\\
e_{3}
\end{array}\right)=\left(\begin{array}{ccc}
-\gamma a_{3}&0&0\\
0&-\gamma a_{3}&0\\
0&0&0
\end{array}\right)\left(\begin{array}{c}
e_{1}\\
e_{2}\\
e_{3}
\end{array}\right),
\end{align}
where
\begin{equation}
a_{1}=\frac{1}{2}(\alpha-\beta-\gamma),~a_{2}=\frac{1}{2}(\alpha-\beta+\gamma),~a_{3}=\frac{1}{2}(\alpha+\beta-\gamma).
\end{equation}
A direct computation for the scalar curvature shows that $s^{0}=-2\gamma a_{3}=-\gamma(\alpha+\beta-\gamma).$
It is easy to obtain that
\begin{align}
\left\{\begin{array}{l}
De_{1}=-(\gamma a_{3}-2\gamma a_{3}\lambda_{0}+c)e_{1},\\
De_{2}=-(\gamma a_{3}-2\gamma a_{3}\lambda_{0}+c)e_{2},\\
De_{3}=-(-2\gamma a_{3}\lambda_{0}+c)e_{3}.\\
\end{array}\right.
\end{align}
Thus,
\begin{align}
\left\{\begin{array}{l}
\gamma(\gamma(\alpha+\beta-\gamma)-\gamma(\alpha+\beta-\gamma)\lambda_{0}+c)=0,\\
\beta(-\gamma(\alpha+\beta-\gamma)\lambda_{0}+c)=0,\\
\alpha(-\gamma(\alpha+\beta-\gamma)\lambda_{0}+c)=0.\\
\end{array}\right.
\end{align}
If $\alpha=0,$ then the cases (i)-(iii) holds.
Choose $\alpha\neq0$ and $c=\gamma(\alpha+\beta-\gamma)\lambda_{0}$, we obtain two cases (iii)-(iv).
\end{proof}

\begin{thm}
$(G_{3}, g, J)$ is the algebraic Schouten soliton associated to the connection $\nabla^{1}$ if and only if\\
(i) $\alpha=\beta=\gamma=0,$ $c\neq 0,$\\
(ii) $\alpha=0,$ $c=-\beta\gamma+\beta\gamma\lambda_{0},$\\
(iii) $\beta= 0,$ $c=-\alpha\gamma+\alpha\gamma\lambda_{0},$\\
(iv) $\alpha\beta\neq 0,$ $\gamma=0,$ $c=0.$
\end{thm}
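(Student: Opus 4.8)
The plan is to run the same template used for $(G_1,g,J)$ and $(G_2,g,J)$ under $\nabla^1$, changing only the input Ricci data. First I would take from \cite{Azami} the symmetrized Ricci operator $\widetilde{{\rm Ric}}^{1}$ of $(G_3,g,J)$. Guided by the canonical-connection computation in Theorem~4.6, where $\widetilde{{\rm Ric}}^{0}$ came out diagonal, I expect $\widetilde{{\rm Ric}}^{1}$ to again be diagonal in $\{e_1,e_2,e_3\}$, with its two leading entries built from the products $\beta\gamma$ and $\alpha\gamma$ and a vanishing $(3,3)$ entry; reading off the diagonal and using (4.12) then expresses $s^{1}$ as a multiple of $\gamma(\alpha+\beta)$. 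This is the only step that invokes a reference computation, exactly as in the earlier proofs.

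Next I would form the candidate derivation from (4.15), namely $D=\widetilde{{\rm Ric}}^{1}-(s^{1}\lambda_{0}+c){\rm Id}$, which is diagonal with entries $\kappa_i=\mu_i-s^{1}\lambda_{0}-c$, where $\mu_i$ denote the diagonal entries of $\widetilde{{\rm Ric}}^{1}$. Because $D$ is diagonal, imposing the derivation identity (4.14) on the three structure relations $[e_1,e_2]=-\gamma e_3$, $[e_1,e_3]=-\beta e_2$ and $[e_2,e_3]=\alpha e_1$ yields exactly three scalar equations, of the respective shapes $\gamma(\kappa_1+\kappa_2-\kappa_3)=0$, $\beta(\kappa_1+\kappa_3-\kappa_2)=0$ and $\alpha(\kappa_2+\kappa_3-\kappa_1)=0$. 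Since $\mathfrak{g}_3$ imposes no algebraic constraints among $\alpha,\beta,\gamma$, in contrast with the non-unimodular algebras of Theorem~2.2, no auxiliary relations enter and the analysis is driven purely by which parameters vanish.

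I would then solve this system by splitting on $\gamma$. If $\gamma\neq0$, the first equation fixes $c=s^{1}(1-\lambda_{0})$, and substituting this value into the remaining two equations should collapse both to the single condition $\alpha\beta\gamma=0$, hence $\alpha\beta=0$; this produces cases (ii) and (iii), with $c=-\beta\gamma+\beta\gamma\lambda_{0}$ when $\alpha=0$ and $c=-\alpha\gamma+\alpha\gamma\lambda_{0}$ when $\beta=0$. If $\gamma=0$, then $s^{1}$ and all the $\mu_i$ vanish, and the last two equations reduce to $\alpha c=\beta c=0$: the abelian sub-case $\alpha=\beta=\gamma=0$ leaves $c$ arbitrary, recorded as case (i) with $c\neq0$ because $c=0$ already falls under (ii)--(iii), whereas $\alpha\beta\neq0$ forces $c=0$, which is case (iv).

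The computation is routine once $\widetilde{{\rm Ric}}^{1}$ is available, so the main obstacle is organizational rather than technical: one must notice that the two ``off-diagonal'' equations become identical after the substitution $c=s^{1}(1-\lambda_{0})$, and then arrange the overlapping specializations so that the degenerate points (all parameters zero, or a single vanishing parameter) are consistently absorbed into (ii)--(iv) while the one genuinely free datum is isolated in (i). Sufficiency is then immediate, since the three scalar equations are exactly the conditions for the diagonal map $D$ to satisfy (4.14); hence in each listed case the constructed $D$ is automatically a derivation, which closes the ``if and only if''.
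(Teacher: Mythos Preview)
Your proposal is correct and follows essentially the same approach as the paper: write $\widetilde{{\rm Ric}}^{1}$ as a diagonal matrix (with entries $-\beta\gamma$, $-\alpha\gamma$, $0$), compute $s^{1}=-\gamma(\alpha+\beta)$, form the diagonal $D$, and reduce the derivation condition to three scalar equations. The only cosmetic difference is in the case split---the paper organizes the analysis by first extracting $\alpha\beta\gamma^{2}=0$ and $\alpha\beta(-\gamma(\alpha+\beta)\lambda_{0}+c)=0$ and then splitting on whether $\alpha\beta=0$ or $\alpha\beta\neq 0$, whereas you split first on $\gamma$---but the underlying computation and the resulting classification are identical.
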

\begin{proof}
We have
\begin{align}
\widetilde{{\rm Ric}}^{1}\left(\begin{array}{c}
e_{1}\\
e_{2}\\
e_{3}
\end{array}\right)=\left(\begin{array}{ccc}
\gamma(a_{1}-a_{3})&0&0\\
0&-\gamma(a_{2}+a_{3})&0\\
0&0&0
\end{array}\right)\left(\begin{array}{c}
e_{1}\\
e_{2}\\
e_{3}
\end{array}\right),
\end{align}
which is clear from \cite{Azami}.
By definition, we get $s^{1}=\gamma(a_{1}-a_{2}-2a_{3})=-\gamma(\alpha+\beta).$
Hence,
\begin{align}
\left\{\begin{array}{l}
De_{1}=-(-\gamma(a_{1}-a_{3})+\gamma(a_{1}-a_{2}-2a_{3})\lambda_{0}+c)e_{1},\\
De_{2}=-(\gamma(a_{2}+a_{3})+\gamma(a_{1}-a_{2}-2a_{3})\lambda_{0}+c)e_{2},\\
De_{3}=-(\gamma(a_{1}-a_{2}-2a_{3})\lambda_{0}+c)e_{3}.\\
\end{array}\right.
\end{align}
Equation (4.14) now becomes
\begin{align}
\left\{\begin{array}{l}
\gamma(\alpha\gamma+\beta\gamma-\gamma(\alpha+\beta)\lambda_{0}+c)=0,\\
\beta(-\alpha\gamma+\beta\gamma-\gamma(\alpha+\beta)\lambda_{0}+c)=0,\\
\alpha(\alpha\gamma-\beta\gamma-\gamma(\alpha+\beta)\lambda_{0}+c)=0.\\
\end{array}\right.
\end{align}
It is easy check that
\begin{align}
\left\{\begin{array}{l}
\alpha\beta\gamma^{2}=0,\\
\alpha\beta(-\gamma(\alpha+\beta)\lambda_{0}+c)=0.\\
\end{array}\right.
\end{align}
We consider $\alpha\beta=0,$ the case (i)-(iii) is holds.
If we consider $\alpha\beta\neq 0,$ then $\gamma=0,$ $c=0$ and the case (iv) holds.
\end{proof}

\begin{thm}
$(G_{4}, g, J)$ is the algebraic Schouten soliton associated to the connection $\nabla^{0}$ if and only if $\alpha=0,$ $\beta=\eta,$ $c=0.$
\end{thm}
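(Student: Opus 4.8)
The plan is to mirror the template used throughout this section (for instance, the proofs of Theorems 4.2 and 4.4). First I would quote from \cite{Azami} the explicit matrix of the symmetrized canonical Ricci operator $\widetilde{{\rm Ric}}^{0}$ of $(G_{4}, g, J)$ in the pseudo-orthonormal frame $\{e_{1},e_{2},e_{3}\}$. Guided by the Levi-Civita computation for $G_{4}$ (Theorem 3.5) and by the shape of $\widetilde{{\rm Ric}}^{0}$ for $G_{2}$ in Theorem 4.4, I expect this operator to be block-structured: a scalar entry along $e_{1}$ together with a $2\times 2$ block coupling $e_{2}$ and $e_{3}$, the canonical connection eliminating several of the terms that are present in the Levi-Civita case. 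From this matrix I would then read off the scalar curvature through Eq.(4.12), namely $s^{0}=\widetilde{\rho}^{0}(e_{1},e_{1})+\widetilde{\rho}^{0}(e_{2},e_{2})-\widetilde{\rho}^{0}(e_{3},e_{3})$.

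With $\widetilde{{\rm Ric}}^{0}$ and $s^{0}$ recorded, I would form the candidate derivation $D=\widetilde{{\rm Ric}}^{0}-(s^{0}\lambda_{0}+c)\,{\rm Id}$ prescribed by Definition 4.1 and write $De_{1}$, $De_{2}$, $De_{3}$ out componentwise. The soliton condition is then exactly that $D$ obey the derivation identity Eq.(4.14). I would impose $D[e_{i},e_{j}]=[De_{i},e_{j}]+[e_{i},De_{j}]$ on each of the three defining brackets of $\mathfrak{g}_{4}$, that is $[e_{1},e_{2}]=-e_{2}+(2\eta-\beta)e_{3}$, $[e_{1},e_{3}]=-\beta e_{2}+e_{3}$ and $[e_{2},e_{3}]=\alpha e_{1}$, all the while using the constraint $\eta^{2}=1$. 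Collecting coefficients produces a polynomial system in the unknowns $\alpha$, $\beta$, $c$, with $\eta\in\{\pm1\}$ and $\lambda_{0}$ a fixed parameter.

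The final step is to solve this system and match it with the claimed answer. By analogy with Theorem 3.5 I expect one of the equations to carry a factor $(\beta-\eta)$, forcing a case split. On the branch $\beta=\eta$ the surviving equations should collapse so as to force $\alpha=0$ and then pin down $c=0$; on the complementary branch $\beta\neq\eta$ I would show that the equations become inconsistent, so that no additional solutions occur. This gives necessity of $\alpha=0$, $\beta=\eta$, $c=0$. For sufficiency I would substitute these three values back into $D$ and check the derivation identity Eq.(4.14) directly, which reduces to a short verification once $\alpha=0$ and $\beta=\eta$ are in force.

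The step I expect to be the main obstacle is the bookkeeping inside the $2\times 2$ block: because $\widetilde{{\rm Ric}}^{0}$ couples $e_{2}$ and $e_{3}$, the equations coming from the brackets $[e_{1},e_{2}]$ and $[e_{1},e_{3}]$ mix several monomials in $\alpha$, $\beta$ and $\eta$, and one must invoke $\eta^{2}=1$ repeatedly to simplify them. Keeping the case analysis around $\beta=\eta$ organized, and carefully excluding the spurious branches in which $\alpha\neq0$, is where the genuine care is needed; the remaining manipulations are routine.
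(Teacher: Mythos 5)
Your proposal is correct and follows essentially the same route as the paper: quote $\widetilde{{\rm Ric}}^{0}$ for $G_{4}$ from \cite{Azami} (it does have the block structure you predict), compute $s^{0}$ via Eq.(4.12), set $D=\widetilde{{\rm Ric}}^{0}-(s^{0}\lambda_{0}+c)\,{\rm Id}$, impose Eq.(4.14) on the three brackets of $\mathfrak{g}_{4}$, and solve the resulting polynomial system using $\eta^{2}=1$. The only difference is bookkeeping: the paper organizes the solution of the system by first reducing to $\alpha=0$ (obtaining its system (4.43), whose first equation collapses to $(\beta\eta-1)^{3}=0$, forcing $\beta=\eta$, $c=0$), whereas you split on $\beta=\eta$ versus $\beta\neq\eta$; both case analyses close successfully, and yours is arguably tidier since the paper leaves the exclusion of the $\alpha\neq 0$ branch implicit.
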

\begin{proof}
From \cite{Azami}, we have
\begin{align}
\widetilde{{\rm Ric}}^{0}\left(\begin{array}{c}
e_{1}\\
e_{2}\\
e_{3}
\end{array}\right)=\left(\begin{array}{ccc}
b_{3}(2\eta-\beta)-1&0&0\\
0&b_{3}(2\eta-\beta)-1&-\frac{1}{2}(b_{3}-\beta)\\
0&\frac{1}{2}(b_{3}-\beta)&0
\end{array}\right)\left(\begin{array}{c}
e_{1}\\
e_{2}\\
e_{3}
\end{array}\right),
\end{align}
where
\begin{equation}
b_1=\frac{1}{2}\alpha+\eta-\beta,~b_2=\frac{1}{2}\alpha-\eta,~b_3=\frac{1}{2}\alpha+\eta.
\end{equation}
Then $s^{0}=2b_{3}(2\eta-\beta)-2=(2\eta+\alpha)(2\eta-\beta)-2.$
According to the condition $\widetilde{{\rm Ric}}^{0}=(s^{0} \lambda_{0}+c){\rm Id}+D,$ we calculate that
\begin{align}
\left\{\begin{array}{l}
De_{1}=(b_{3}(2\eta-\beta)-1-2b_{3}(2\eta-\beta)\lambda_{0}+2\lambda_{0}-c)e_{1},\\
De_{2}=(b_{3}(2\eta-\beta)-1-2b_{3}(2\eta-\beta)\lambda_{0}+2\lambda_{0}-c)e_{2}-\frac{1}{2}(b_{3}-\beta)e_{3},\\
De_{3}=\frac{1}{2}(b_{3}-\beta)e_{2}-(2b_{3}(2\eta-\beta)\lambda_{0}-2\lambda_{0}+c)e_{3}.\\
\end{array}\right.
\end{align}
Hence, Eq.(4.14) now yields
\begin{align}
\left\{\begin{array}{l}
\alpha((2\eta+\alpha)(2\eta-\beta)\lambda_{0}-2\lambda_{0}+c)=0,\\
\beta((2\eta+\alpha)(2\eta-\beta)\lambda_{0}-2\lambda_{0}+c)-(\frac{1}{2}\alpha+\eta-\beta)=0,\\
(2\eta-\beta)((2\eta+\alpha)(2\eta-\beta)-2-(2\eta+\alpha)(2\eta-\beta)\lambda_{0}+2\lambda_{0}-c)-(\frac{1}{2}\alpha+\eta-\beta)=0,\\
(\frac{1}{2}\alpha+\eta)(2\eta-\beta)-1-(2\eta+\alpha)(2\eta-\beta)\lambda_{0}+2\lambda_{0}-c+(\frac{1}{2}\alpha+\eta-\beta)(\eta-\beta)=0.\\
\end{array}\right.
\end{align}
For $\eta=\pm1$ and $\alpha=0,$ a straightforward calculation shows that
\begin{align}
\left\{\begin{array}{l}
\beta(-2\beta\eta\lambda_{0}+2\lambda_{0}+c)-(\eta-\beta)=0,\\
(2\eta-\beta)(-2\beta\eta+2+2\beta\eta\lambda_{0}-2\lambda_{0}-c)-(\eta-\beta)=0,\\
-\beta\eta+1+2\beta\eta\lambda_{0}-2\lambda_{0}-c+(\eta-\beta)^{2}=0.\\
\end{array}\right.
\end{align}
Solving (4.43), we get $\beta=\eta,$ $c=0.$
\end{proof}

\begin{thm}
$(G_{4}, g, J)$ is not the algebraic Schouten soliton associated to the connection $\nabla^{1}.$
\end{thm}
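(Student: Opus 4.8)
The plan is to follow the template of Theorems 4.8--4.14 but to arrive this time at an \emph{inconsistent} system, which is exactly what the negative conclusion requires. First I would record, from \cite{Azami}, the matrix of $\widetilde{{\rm Ric}}^{1}$ for $(G_{4},g,J)$ in the pseudo-orthonormal basis $\{e_{1},e_{2},e_{3}\}$. As in the $\nabla^{0}$ computation of Theorem 4.8, I expect the auxiliary quantities $b_{1}=\frac{1}{2}\alpha+\eta-\beta$, $b_{2}=\frac{1}{2}\alpha-\eta$, $b_{3}=\frac{1}{2}\alpha+\eta$ from Eq.(4.41) to reappear, with the Kobayashi--Nomizu correction producing off-diagonal terms. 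The scalar curvature $s^{1}=\widetilde{\rho}^{1}(e_{1},e_{1})+\widetilde{\rho}^{1}(e_{2},e_{2})-\widetilde{\rho}^{1}(e_{3},e_{3})$ can then be read off directly from the diagonal of that matrix.

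Next, using the defining relation $\widetilde{{\rm Ric}}^{1}=(s^{1}\lambda_{0}+c){\rm Id}+D$, I would solve for the entries of $D$ by subtracting $(s^{1}\lambda_{0}+c){\rm Id}$ from the Ricci matrix, exactly as in Eq.(4.42). I then impose the derivation condition $D[e_{i},e_{j}]=[De_{i},e_{j}]+[e_{i},De_{j}]$ on each of the three brackets of $\mathfrak{g}_{4}$, namely $[e_{1},e_{2}]=-e_{2}+(2\eta-\beta)e_{3}$, $[e_{1},e_{3}]=-\beta e_{2}+e_{3}$ and $[e_{2},e_{3}]=\alpha e_{1}$. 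Collecting the coefficients of $e_{1},e_{2},e_{3}$ yields a system of polynomial equations in $\alpha,\beta,\eta,c,\lambda_{0}$, analogous in form to Eq.(4.43) but governed by the \emph{asymmetric} Kobayashi--Nomizu Ricci operator, so that extra off-diagonal constraints appear that were absent in the $\nabla^{0}$ case.

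The decisive step is to show this system has no solution under the standing constraint $\eta=\pm1$ (so $\eta\neq0$ and $\eta^{2}=1$). I would split on whether $\alpha$ vanishes. When $\alpha=0$ the bracket $[e_{2},e_{3}]$ drops out, and the equations coming from $[e_{1},e_{2}]$ and $[e_{1},e_{3}]$ remain; here the natural candidate $\beta=\eta$ (which succeeded for $\nabla^{0}$ in Theorem 4.8) should now be shown to force two incompatible values of $c$, or a relation such as $\eta^{2}+1=0$ that cannot hold for $\eta=\pm1$. When $\alpha\neq0$, the equation arising from $[e_{2},e_{3}]=\alpha e_{1}$ together with the diagonal conditions over-determines the pair $(c,\lambda_{0})$, and I would again push this to an explicit contradiction.

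I expect the main obstacle to lie in the $\alpha\neq0$ branch: because $\widetilde{{\rm Ric}}^{1}$ is not symmetric, its off-diagonal entries inject additional terms into the derivation equations, and one must verify that no choice of the free parameters $c$ and $\lambda_{0}$ can reconcile them all at once. The cleanest route is to extract two independent expressions for the quantity $s^{1}\lambda_{0}+c$ from different equations of the system and show that their difference is a nonzero polynomial in $\alpha,\beta,\eta$ which cannot vanish when $\eta=\pm1$. Since every case collapses to such a contradiction, $(G_{4},g,J)$ admits no algebraic Schouten soliton associated to $\nabla^{1}$, proving the claim.
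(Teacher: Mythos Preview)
Your proposal is correct and follows essentially the same route as the paper: record $\widetilde{{\rm Ric}}^{1}$ from \cite{Azami} (indeed first in terms of the $b_{i}$ and then simplified), read off $s^{1}=-(2+(\alpha+\beta)(\beta-2\eta))$, extract $D$ from $\widetilde{{\rm Ric}}^{1}=(s^{1}\lambda_{0}+c){\rm Id}+D$, impose the derivation condition on the three brackets of $\mathfrak{g}_{4}$, and show the resulting system is inconsistent. The paper is actually terser than your plan---after displaying the four-equation system it simply asserts that it has no solutions---so your proposed case split on $\alpha=0$ versus $\alpha\neq0$ is exactly the detail one would supply to justify that assertion. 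One small correction of expectation: the off-diagonal shape of the $\widetilde{{\rm Ric}}^{1}$ matrix here is the same $(2,3)/(3,2)$ pattern as in the $\nabla^{0}$ case, not a new source of constraints; the inconsistency comes from the changed diagonal and $(2,3)$ \emph{values}, and in particular the $\alpha=0$ branch dies quickly (one is forced to $\beta=2\eta$ and then to $1=0$), contrary to your guess that $\alpha\neq0$ is the harder case.
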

\begin{proof}
In this case we have
\begin{align}
\widetilde{{\rm Ric}}^{1}\left(\begin{array}{c}
e_{1}\\
e_{2}\\
e_{3}
\end{array}\right)=\left(\begin{array}{ccc}
-(1+(\beta-2\eta)(b_{3}-b_{1}))&0&0\\
0&-(1+(\beta-2\eta)(b_{2}+b_{3}))&\frac{b_{1}-b_{3}-\alpha+\beta}{2}\\
0&\frac{\alpha-\beta-b_{1}+b_{3}}{2}&0
\end{array}\right)\left(\begin{array}{c}
e_{1}\\
e_{2}\\
e_{3}
\end{array}\right).
\end{align}
That is
\begin{align}
\widetilde{{\rm Ric}}^{1}\left(\begin{array}{c}
e_{1}\\
e_{2}\\
e_{3}
\end{array}\right)=\left(\begin{array}{ccc}
-(1+\beta(\beta-2\eta))&0&0\\
0&-(1+\alpha(\beta-2\eta))&-\frac{1}{2}\alpha\\
0&\frac{1}{2}\alpha&0
\end{array}\right)\left(\begin{array}{c}
e_{1}\\
e_{2}\\
e_{3}
\end{array}\right).
\end{align}
So we have $s^{1}=-(2+(\alpha+\beta)(\beta-2\eta)).$
If $(G_{4}, g, J)$ is the algebraic Schouten soliton associated to the connection $\nabla^1$, then $\widetilde{{\rm Ric}}^{1}=(s^{1} \lambda_{0}+c){\rm Id}+D,$ so
\begin{align}
\left\{\begin{array}{l}
De_{1}=-(1+\beta(\beta-2\eta)-(2+(\alpha+\beta)(\beta-2\eta))\lambda_{0}+c)e_{1},\\
De_{2}=-(1+\alpha(\beta-2\eta)-(2+(\alpha+\beta)(\beta-2\eta))\lambda_{0}+c)e_{2}-\frac{1}{2}\alpha e_{3},\\
De_{3}=\frac{1}{2}\alpha e_{2}-(-(2+(\alpha+\beta)(\beta-2\eta))\lambda_{0}+c)e_{3}.\\
\end{array}\right.
\end{align}
For this reason Equation (4.14) now becomes
\begin{align}
\left\{\begin{array}{l}
1+(\frac{1}{2}\alpha+\beta)(\beta-2\eta)-(2+(\alpha+\beta)(\beta-2\eta))\lambda_{0}+c+\frac{1}{2}\alpha\beta=0,\\
(\beta-2\eta)(2+(\alpha+\beta)(\beta-2\eta)-(2+(\alpha+\beta)(\beta-2\eta))\lambda_{0}+c)-\alpha=0,\\
\beta(-(\alpha-\beta)(\beta-2\eta)-(2+(\alpha+\beta)(\beta-2\eta))\lambda_{0}+c)-\alpha=0,\\
\alpha((\alpha-\beta)(\beta-2\eta)-(2+(\alpha+\beta)(\beta-2\eta))\lambda_{0}+c)=0.\\
\end{array}\right.
\end{align}
(4.47) has no solutions, we find that $(G_{4}, g, J)$ is not the algebraic Schouten soliton associated to the connection $\nabla^{1}.$
\end{proof}

\begin{thm}
$(G_{5}, g, J)$ is the algebraic Schouten soliton associated to the connection $\nabla^{0}$ if and only if $c=0.$
\end{thm}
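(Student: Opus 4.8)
The plan is to follow the pipeline already used for $(G_{1},g,J)$ through $(G_{4},g,J)$ in this section. First I would record the symmetrized Ricci operator $\widetilde{{\rm Ric}}^{0}$ of $(G_{5},g,J)$ associated to the canonical connection $\nabla^{0}$, taken from \cite{Azami}, as a matrix in the pseudo-orthonormal basis $\{e_{1},e_{2},e_{3}\}$; for $\mathfrak{g}_{5}$ the brackets $[e_{1},e_{2}]=0$, $[e_{1},e_{3}]=\alpha e_{1}+\beta e_{2}$, $[e_{2},e_{3}]=\gamma e_{1}+\delta e_{2}$ determine its entries in terms of $\alpha,\beta,\gamma,\delta$. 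From the diagonal entries and (4.12) I would read off the scalar curvature $s^{0}=\widetilde{\rho}^{0}(e_{1},e_{1})+\widetilde{\rho}^{0}(e_{2},e_{2})-\widetilde{\rho}^{0}(e_{3},e_{3})$. Then, using the defining equation (4.13), the candidate derivation is $D=\widetilde{{\rm Ric}}^{0}-(s^{0}\lambda_{0}+c){\rm Id}$, whose values $De_{1},De_{2},De_{3}$ I would write out explicitly.

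Next I would impose the derivation identity (4.14) on each of the three brackets of $\mathfrak{g}_{5}$. A useful structural simplification is that, writing $D=\widetilde{{\rm Ric}}^{0}-\mu\,{\rm Id}$ with $\mu=s^{0}\lambda_{0}+c$, the identity is equivalent to $\widetilde{{\rm Ric}}^{0}[e_{i},e_{j}]+\mu[e_{i},e_{j}]=[\widetilde{{\rm Ric}}^{0}e_{i},e_{j}]+[e_{i},\widetilde{{\rm Ric}}^{0}e_{j}]$. Since $[e_{1},e_{2}]=0$, the $(1,2)$ bracket produces only the $\mu$-free condition $[\widetilde{{\rm Ric}}^{0}e_{1},e_{2}]+[e_{1},\widetilde{{\rm Ric}}^{0}e_{2}]=0$, whereas the brackets $[e_{1},e_{3}]$ and $[e_{2},e_{3}]$ yield the governing scalar equations after matching the $e_{1},e_{2},e_{3}$ components. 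Expanding these with the structure constants gives the polynomial system in $\alpha,\beta,\gamma,\delta,\lambda_{0},c$ to be solved.

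Finally I would solve this system under the standing constraints $\alpha+\delta\neq 0$ and $\alpha\gamma+\beta\delta=0$ defining $\mathfrak{g}_{5}$. Since the claimed answer imposes no restriction on $\alpha,\beta,\gamma,\delta$, I expect every parameter-dependent term in (4.14) to cancel once $\alpha\gamma+\beta\delta=0$ is substituted, so that the system collapses to the single requirement $c=0$, after which the resulting $D$ is automatically a derivation for all admissible parameters. The main obstacle is exactly this identical cancellation: one must verify that the quadratic combinations of $\alpha,\beta,\gamma,\delta$ coming from the $e_{1}$- and $e_{2}$-components of $[\widetilde{{\rm Ric}}^{0}e_{i},e_{3}]+[e_{i},\widetilde{{\rm Ric}}^{0}e_{3}]$ all reduce to multiples of $\alpha\gamma+\beta\delta$ and hence vanish, leaving no residual constraint beyond $c=0$. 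Keeping track of the off-diagonal entries of $\widetilde{{\rm Ric}}^{0}$, which propagate into these bracket computations, is the delicate bookkeeping that must be done with care.
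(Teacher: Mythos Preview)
Your plan is the paper's method and would succeed, but you are bracing for a cancellation that never occurs: for $\mathfrak{g}_{5}$ the symmetrized Ricci operator $\widetilde{{\rm Ric}}^{0}$ associated to $\nabla^{0}$ is the zero matrix (this is the input taken from \cite{Azami}), so $s^{0}=0$ and $D=-c\,{\rm Id}$. The derivation identity on the nonzero brackets $[e_{1},e_{3}]=\alpha e_{1}+\beta e_{2}$ and $[e_{2},e_{3}]=\gamma e_{1}+\delta e_{2}$ then reads $-c(\alpha e_{1}+\beta e_{2})=-2c(\alpha e_{1}+\beta e_{2})$ and similarly for the second bracket, giving $\alpha c=\beta c=\gamma c=\delta c=0$; since $\alpha+\delta\neq 0$ this forces $c=0$. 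No use of $\alpha\gamma+\beta\delta=0$ or off-diagonal bookkeeping is needed.
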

\begin{proof}
We have
\begin{align}
\widetilde{{\rm Ric}}^{0}\left(\begin{array}{c}
e_{1}\\
e_{2}\\
e_{3}
\end{array}\right)=\left(\begin{array}{ccc}
0&0&0\\
0&0&0\\
0&0&0
\end{array}\right)\left(\begin{array}{c}
e_{1}\\
e_{2}\\
e_{3}
\end{array}\right).
\end{align}
So $s^{0}=0.$
We see that
\begin{align}
\left\{\begin{array}{l}
De_{1}=-ce_{1},\\
De_{2}=-ce_{2},\\
De_{3}=-ce_{3}.\\
\end{array}\right.
\end{align}
By the analysis above, we have
\begin{align}
\left\{\begin{array}{l}
\alpha c=0,\\
\beta c=0,\\
\gamma c=0,\\
\delta c=0.\\
\end{array}\right.
\end{align}
On the basis of $\alpha+\delta\neq 0,$ $\alpha\gamma+\beta\delta=0,$ we get $c=0.$
\end{proof}

\begin{thm}
$(G_{5}, g, J)$ is the algebraic Schouten soliton associated to the connection $\nabla^{1}$ if and only if $c=0.$
\end{thm}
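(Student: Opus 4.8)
The plan is to reproduce, essentially verbatim, the argument just given for the $\nabla^{0}$ case of $G_{5}$; since the asserted conclusion ($c=0$, with no constraint on $\alpha,\beta,\gamma,\delta$) is identical, I expect the computation to run in parallel. First I would record the symmetrized Kobayashi--Nomizu Ricci operator $\widetilde{{\rm Ric}}^{1}$ of $(G_{5},g,J)$ from \cite{Azami} as a matrix in the basis $\{e_{1},e_{2},e_{3}\}$, and then extract the scalar curvature $s^{1}$ from Eq.(4.12) via $s^{1}=\widetilde{\rho}^{1}(e_{1},e_{1})+\widetilde{\rho}^{1}(e_{2},e_{2})-\widetilde{\rho}^{1}(e_{3},e_{3})$. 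In view of the $\nabla^{0}$ outcome I anticipate $\widetilde{{\rm Ric}}^{1}$ to be the zero operator, whence $s^{1}=0$.

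From the soliton condition (4.15), $\widetilde{{\rm Ric}}^{1}=(s^{1}\lambda_{0}+c){\rm Id}+D$, I would solve for $D=\widetilde{{\rm Ric}}^{1}-(s^{1}\lambda_{0}+c){\rm Id}$ and write out $De_{1},De_{2},De_{3}$; in the anticipated case this reduces to $D=-c\,{\rm Id}$. I would then feed $D$ into the derivation identity (4.14), $D[e_{i},e_{j}]=[De_{i},e_{j}]+[e_{i},De_{j}]$, using the brackets of $\mathfrak{g}_{5}$: $[e_{1},e_{2}]=0$, $[e_{1},e_{3}]=\alpha e_{1}+\beta e_{2}$, $[e_{2},e_{3}]=\gamma e_{1}+\delta e_{2}$. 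Collecting the coefficients of $e_{1}$ and $e_{2}$ in each bracket yields a system of scalar equations in $\alpha,\beta,\gamma,\delta,\lambda_{0},c$.

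The decisive step is to extract $c=0$ from this system. Because a multiple $c\,{\rm Id}$ of the identity is a derivation of a Lie algebra only when it annihilates every bracket, and $\mathfrak{g}_{5}$ is non-abelian, the derivation condition should collapse to the relations $\alpha c=\beta c=\gamma c=\delta c=0$, exactly as in the $\nabla^{0}$ case. Invoking the non-unimodularity hypothesis $\alpha+\delta\neq 0$ from Theorem 2.2, at least one of $\alpha,\delta$ is nonzero, so $c=0$. This reliance on $\alpha+\delta\neq 0$ is the only subtle point; the remaining constraint $\alpha\gamma+\beta\delta=0$ is needed, if at all, only to dispose of any off-diagonal equations should $\widetilde{{\rm Ric}}^{1}$ fail to vanish completely. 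For the converse, setting $c=0$ makes $D=\widetilde{{\rm Ric}}^{1}-s^{1}\lambda_{0}{\rm Id}$, and I would verify by direct substitution that this $D$ satisfies (4.14) for all admissible parameters, thereby completing the equivalence.
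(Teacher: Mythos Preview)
Your proposal is correct and follows essentially the same approach as the paper: both record $\widetilde{{\rm Ric}}^{1}=0$ (hence $s^{1}=0$), reduce the soliton equation to $D=-c\,{\rm Id}$, and then use the derivation identity on the nonzero brackets of $\mathfrak{g}_{5}$ together with $\alpha+\delta\neq 0$ to force $c=0$. Your explicit treatment of the converse (noting that $c=0$ gives $D=0$, which is trivially a derivation) is a small improvement over the paper's presentation.
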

\begin{proof}
From
\begin{align}
\widetilde{{\rm Ric}}^{1}\left(\begin{array}{c}
e_{1}\\
e_{2}\\
e_{3}
\end{array}\right)=\left(\begin{array}{ccc}
0&0&0\\
0&0&0\\
0&0&0
\end{array}\right)\left(\begin{array}{c}
e_{1}\\
e_{2}\\
e_{3}
\end{array}\right),
\end{align}
we get $s^{1}=0.$
It follows that
\begin{align}
\left\{\begin{array}{l}
De_{1}=-ce_{1},\\
De_{2}=-ce_{2},\\
De_{3}=-ce_{3}.\\
\end{array}\right.
\end{align}
Thus,
\begin{align}
\left\{\begin{array}{l}
\alpha c=0,\\
\beta c=0,\\
\gamma c=0,\\
\delta c=0.\\
\end{array}\right.
\end{align}
Note that, $\alpha+\delta\neq 0,$ we get $c=0.$
\end{proof}

\begin{thm}
$(G_{6}, g, J)$ is the algebraic Schouten soliton associated to the connection $\nabla^{0}$ if and only if\\
(i) $\alpha+\delta\neq 0,$ $\beta=\gamma=0,$ $c=-\alpha^{2}+\alpha^{2}\lambda_{0},$\\
(ii) $\alpha\neq 0,$ $\beta^{2}=2\alpha^{2},$ $\gamma=\delta=0,$ $c=0.$
\end{thm}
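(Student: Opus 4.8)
The plan is to follow the same template used for the preceding theorems of this section, specialized to the algebra $\mathfrak{g}_6$. First I would record the operator $\widetilde{\rm Ric}^0$ attached to the canonical connection $\nabla^0$ on $G_6$, taken from \cite{Azami}; by analogy with the $G_2$ and $G_3$ computations above it should be diagonal in the $e_1$ direction, with at most an $e_2$--$e_3$ coupling arising from the product structure $J$. From this matrix I would read off the scalar curvature through (4.12), namely $s^0=\widetilde\rho^0(e_1,e_1)+\widetilde\rho^0(e_2,e_2)-\widetilde\rho^0(e_3,e_3)$.

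Next I would substitute into the soliton equation (4.13), $\widetilde{\rm Ric}^0=(s^0\lambda_0+c){\rm Id}+D$, and solve for the derivation $D$ column by column, subtracting the appropriate scalar multiple of the identity from each column of $\widetilde{\rm Ric}^0$. This gives explicit formulas for $De_1$, $De_2$, $De_3$ of the same shape as those produced in the canonical-connection computations for $G_2$ and $G_3$ above.

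The heart of the argument is then to impose the derivation condition (4.14), $D[e_i,e_j]=[De_i,e_j]+[e_i,De_j]$, on the three structure brackets of $\mathfrak{g}_6$, namely $[e_1,e_2]=\alpha e_2+\beta e_3$, $[e_1,e_3]=\gamma e_2+\delta e_3$ and $[e_2,e_3]=0$. Expanding each bracket by the Lie relations and matching the coefficients of $e_1$, $e_2$, $e_3$ yields a polynomial system in $\alpha,\beta,\gamma,\delta,\lambda_0,c$, into which I would insert the two defining constraints $\alpha+\delta\neq 0$ and $\alpha\gamma-\beta\delta=0$, exactly as in the Levi-Civita treatment of $G_6$ given earlier.

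Finally I would run the case analysis by splitting on whether $\beta=0$. The branch $\beta=0$, combined with $\alpha\gamma-\beta\delta=0$ and $\alpha+\delta\neq0$, should collapse to case (i), forcing $\gamma=0$ and $c=-\alpha^2+\alpha^2\lambda_0$; the branch $\beta\neq0$ should collapse to case (ii), forcing $\delta=0$ (whence $\gamma=0$ by the constraint since $\alpha\neq0$), together with $\beta^2=2\alpha^2$ and $c=0$. The main obstacle I anticipate is precisely reconciling the derivation equations with the relation $\alpha\gamma-\beta\delta=0$ in the $\beta\neq0$ branch so as to force $\delta=0$ and then $c=0$; this is the step that separates case (ii) from spurious solutions and pins down the two stated alternatives.
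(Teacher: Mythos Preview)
Your plan follows the paper's template exactly: record $\widetilde{\rm Ric}^{0}$ from \cite{Azami}, compute $s^{0}=\beta(\beta-\gamma)-2\alpha^{2}$, solve for $D$, and impose (4.14) on the three brackets of $\mathfrak g_{6}$. The one tactical difference is the case split. After using $\alpha+\delta\neq0$ and $\alpha\gamma-\beta\delta=0$, the paper's reduced system carries the factor $(\beta+\gamma)$, and the paper splits on $\beta+\gamma=0$ versus $\beta+\gamma\neq0$ rather than on $\beta$. In the first branch the constraint $\alpha\gamma-\beta\delta=0$ becomes $\beta(\alpha+\delta)=0$, forcing $\beta=\gamma=0$ and yielding case~(i); in the second branch one reads off $c$ from the remaining equation and is led directly to $\beta^{2}=2\alpha^{2}$, $\gamma=\delta=0$, $c=0$. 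Your split on $\beta$ would also succeed, but it does not exploit the $(\beta+\gamma)$ factor, which is exactly why you anticipated extra work in the $\beta\neq0$ branch; the paper's choice of splitting variable dissolves that obstacle.
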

\begin{proof}
We recall the following result:
\begin{align}
\widetilde{{\rm Ric}}^{0}\left(\begin{array}{c}
e_{1}\\
e_{2}\\
e_{3}
\end{array}\right)=\left(\begin{array}{ccc}
\frac{1}{2}\beta(\beta-\gamma)-\alpha^{2}&0&0\\
0&\frac{1}{2}\beta(\beta-\gamma)-\alpha^{2}&-\frac{1}{2}(-\gamma\alpha+\frac{1}{2}\delta(\beta-\gamma))\\
0&\frac{1}{2}(-\gamma\alpha+\frac{1}{2}\delta(\beta-\gamma))&0
\end{array}\right)\left(\begin{array}{c}
e_{1}\\
e_{2}\\
e_{3}
\end{array}\right).
\end{align}
And we get $s^{0}=\beta(\beta-\gamma)-2\alpha^{2}.$
Therefore for $(G_{6}, g, J)$ we have
\begin{align}
\left\{\begin{array}{l}
De_{1}=(\frac{1}{2}\beta(\beta-\gamma)-\alpha^{2}-(\beta(\beta-\gamma)-2\alpha^{2})\lambda_{0}-c)e_{1},\\
De_{2}=(\frac{1}{2}\beta(\beta-\gamma)-\alpha^{2}-(\beta(\beta-\gamma)-2\alpha^{2})\lambda_{0}-c)e_{2}-\frac{1}{2}(-\gamma\alpha+\frac{1}{2}\delta(\beta-\gamma))e_{3},\\
De_{3}=\frac{1}{2}(-\gamma\alpha+\frac{1}{2}\delta(\beta-\gamma))e_{2}-((\beta(\beta-\gamma)-2\alpha^{2})\lambda_{0}+c)e_{3}.\\
\end{array}\right.
\end{align}
By (4.14), we get
\begin{align}
\left\{\begin{array}{l}
\alpha(\frac{1}{2}\beta(\beta-\gamma)-\alpha^{2}-\beta(\beta-\gamma)\lambda_{0}+2\alpha^{2}\lambda_{0}-c)+\frac{1}{2}(\beta+\gamma)(\gamma\alpha-\frac{1}{2}\delta(\beta-\gamma))=0,\\
\beta(\beta(\beta-\gamma)-2\alpha^{2}-\beta(\beta-\gamma)\lambda_{0}+2\alpha^{2}\lambda_{0}-c)+\frac{1}{2}(\delta-\alpha)(\gamma\alpha-\frac{1}{2}\delta(\beta-\gamma))=0,\\
\gamma(\beta(\beta-\gamma)\lambda_{0}-2\alpha^{2}\lambda_{0}+c)-\frac{1}{2}(\delta-\alpha)(\gamma\alpha-\frac{1}{2}\delta(\beta-\gamma))=0,\\
\delta(\frac{1}{2}\beta(\beta-\gamma)-\alpha^{2}-\beta(\beta-\gamma)\lambda_{0}+2\alpha^{2}\lambda_{0}-c)-\frac{1}{2}(\beta+\gamma)(\gamma\alpha-\frac{1}{2}\delta(\beta-\gamma))=0.\\
\end{array}\right.
\end{align}
According to the condition $\alpha+\delta\neq 0,$ $\alpha\gamma-\beta\delta=0,$ we calculate that
\begin{align}
\left\{\begin{array}{l}
\frac{1}{2}\beta(\beta-\gamma)-\alpha^{2}-\beta(\beta-\gamma)\lambda_{0}+2\alpha^{2}\lambda_{0}-c=0,\\
(\beta+\gamma)(\gamma\alpha-\frac{1}{2}\delta(\beta-\gamma))=0,\\
(\beta+\gamma)(\beta(\beta-\gamma)\lambda_{0}-2\alpha^{2}\lambda_{0}+c)=0.\\
\end{array}\right.
\end{align}
Choose $\beta+\gamma=0,$ then we have $\beta(\alpha+\delta)=0,$ and $c=-\alpha^{2}+\alpha^{2}\lambda_{0}.$
Set $\beta+\gamma\neq0$ and $c=-\beta(\beta-\gamma)\lambda_{0}+2\alpha^{2}\lambda_{0}.$ By calculation, we have $\beta^{2}=2\alpha^{2},$ $\gamma=\delta=0$ and then $c=0.$
\end{proof}

\begin{thm}
$(G_{6}, g, J)$ is the algebraic Schouten soliton associated to the connection $\nabla^{1}$ if and only if\\
(i) $\alpha=\beta=0,$ $\delta\neq 0,$ $c=0,$\\
(ii) $\alpha\neq 0,$ $\beta=\gamma=0,$ $\alpha+\delta\neq 0,$ $c=-\alpha^{2}+2\alpha^{2}\lambda_{0}.$
\end{thm}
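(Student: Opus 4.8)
The plan is to follow verbatim the scheme of the preceding theorems of this section. First I would record, from \cite{Azami}, the matrix of the symmetrized Ricci operator $\widetilde{{\rm Ric}}^{1}$ of $(G_{6}, g, J)$ in the pseudo-orthonormal basis $\{e_{1}, e_{2}, e_{3}\}$. By analogy with the canonical-connection computation (4.54) and with the $G_{2}$ calculation (4.27), I expect this matrix to have the $e_{1}$-line decoupled, scalar entries in the $(1,1)$ and $(2,2)$ slots, a skew pair in the $(2,3)$ and $(3,2)$ slots carrying a factor of $\alpha$, and a vanishing $(3,3)$ entry. From this I would read off the scalar curvature through (4.12), namely $s^{1}=\widetilde{\rho}^{1}(e_{1}, e_{1})+\widetilde{\rho}^{1}(e_{2}, e_{2})-\widetilde{\rho}^{1}(e_{3}, e_{3})$, so that $s^{1}$ is the sum of the two diagonal entries.

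Next I would substitute into the soliton equation (4.15), writing $D=\widetilde{{\rm Ric}}^{1}-(s^{1}\lambda_{0}+c){\rm Id}$, and spell out $De_{1}$, $De_{2}$, $De_{3}$. Imposing the derivation identity (4.14) on the three brackets of $\mathfrak{g}_{6}$ produces the governing system. Since $[e_{2}, e_{3}]=0$, that bracket contributes nothing, so the entire content comes from $D[e_{1}, e_{2}]=[De_{1}, e_{2}]+[e_{1}, De_{2}]$ and $D[e_{1}, e_{3}]=[De_{1}, e_{3}]+[e_{1}, De_{3}]$. Matching the $e_{2}$- and $e_{3}$-components of each gives four scalar equations, weighted respectively by $\alpha$, $\beta$, $\gamma$, $\delta$ and all sharing the common factor $X:=(\widetilde{{\rm Ric}}^{1})_{11}-s^{1}\lambda_{0}-c$, the $(1,1)$-entry of $D$.

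The core of the argument is the ensuing case analysis, run under the standing hypotheses $\alpha+\delta\neq 0$ and $\alpha\gamma-\beta\delta=0$. I would first add the $\alpha$-weighted and $\delta$-weighted equations; because $\alpha+\delta\neq 0$, this isolates $X$ and forces $X=0$, which is precisely the relation fixing $c$. I would then branch on $\beta$. For $\beta\neq 0$ I expect the surviving equations, combined with $\alpha\gamma-\beta\delta=0$, to be incompatible, so that branch yields no soliton (this is why $\beta=0$ appears in both listed cases). For $\beta=0$ the constraint degenerates to $\alpha\gamma=0$, and I split once more: if $\alpha=0$ then $\delta\neq 0$, the off-diagonal entries vanish (their $\alpha$-factor kills them), and $X=0$ gives $c=0$, which is case (i) with $\gamma$ left free; if $\alpha\neq 0$ then $\gamma=0$, and $X=0$ evaluates to $c=-\alpha^{2}+2\alpha^{2}\lambda_{0}$, which is case (ii).

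The main obstacle I anticipate is bookkeeping rather than any single deep step: correctly isolating the two genuinely independent relations among the four scalar equations, and using the quadratic constraint $\alpha\gamma-\beta\delta=0$ to discard the $\beta\neq 0$ branch without creating spurious solutions. A subsidiary check is that in case (i) the free parameter $\gamma$ is accommodated by the off-diagonal part of $D$, so that $D$ really is a derivation even though $\gamma$ is not determined.
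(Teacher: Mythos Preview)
Your plan matches the paper's proof almost exactly: write down $\widetilde{{\rm Ric}}^{1}$ from \cite{Azami}, compute $s^{1}$, set $D=\widetilde{{\rm Ric}}^{1}-(s^{1}\lambda_{0}+c){\rm Id}$, impose the derivation law on the brackets of $\mathfrak{g}_{6}$, then add the $\alpha$- and $\delta$-weighted equations and use $\alpha+\delta\neq 0$ to solve for $c$, after which the remaining equations force $\alpha^{2}\beta=0$ and $\alpha^{2}\gamma=0$ and the stated dichotomy follows.

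One correction to your expectations: the matrix $\widetilde{{\rm Ric}}^{1}$ for $G_{6}$ is actually \emph{diagonal}, with entries $-(\alpha^{2}+\beta\gamma)$, $-\alpha^{2}$, $0$; there is no skew $(2,3)$-pair here. This simplifies the computation (only four clean scalar equations, exactly as in the paper's (4.60)) and slightly changes your closing remark: in case~(i) the parameter $\gamma$ is free not because an off-diagonal piece of $D$ absorbs it, but because $D$ vanishes identically when $\alpha=\beta=0$, $c=0$, and the zero map is trivially a derivation. Also, your claim that the $\beta\neq 0$ branch is incompatible is correct, and the mechanism is precisely the one you name: after fixing $c$, the $\beta$-equation forces $\alpha=0$, whence $\alpha\gamma-\beta\delta=0$ gives $\delta=0$, contradicting $\alpha+\delta\neq 0$.
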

\begin{proof}
From \cite{Azami}, we get
\begin{align}
\widetilde{{\rm Ric}}^{1}\left(\begin{array}{c}
e_{1}\\
e_{2}\\
e_{3}
\end{array}\right)=\left(\begin{array}{ccc}
-(\alpha^{2}+\beta\gamma)&0&0\\
0&-\alpha^{2}&0\\
0&0&0
\end{array}\right)\left(\begin{array}{c}
e_{1}\\
e_{2}\\
e_{3}
\end{array}\right).
\end{align}
It is a simple matter to $s^{1}=-(2\alpha^{2}+\beta\gamma).$
It follows that
\begin{align}
\left\{\begin{array}{l}
De_{1}=-(\alpha^{2}+\beta\gamma-(2\alpha^{2}+\beta\gamma)\lambda_{0}+c)e_{1},\\
De_{2}=-(\alpha^{2}-(2\alpha^{2}+\beta\gamma)\lambda_{0}+c)e_{2},\\
De_{3}=-(-(2\alpha^{2}+\beta\gamma)\lambda_{0}+c)e_{3}.\\
\end{array}\right.
\end{align}
An easy computation shows that
\begin{align}
\left\{\begin{array}{l}
\alpha(\alpha^{2}+\beta\gamma-(2\alpha^{2}+\beta\gamma)\lambda_{0}+c)=0,\\
\beta(2\alpha^{2}+\beta\gamma-(2\alpha^{2}+\beta\gamma)\lambda_{0}+c)=0,\\
\gamma(\beta\gamma-(2\alpha^{2}+\beta\gamma)\lambda_{0}+c)=0,\\
\delta(\alpha^{2}+\beta\gamma-(2\alpha^{2}+\beta\gamma)\lambda_{0}+c)=0.\\
\end{array}\right.
\end{align}
The first and fourth equations of system Eq.(4.60) imply that
\begin{align}
(\alpha+\delta)(\alpha^{2}+\beta\gamma-(2\alpha^{2}+\beta\gamma)\lambda_{0}+c)=0.
\end{align}
Because $\alpha+\delta\neq 0,$ then we have $c=-\alpha^{2}-\beta\gamma+(2\alpha^{2}+\beta\gamma)\lambda_{0},$ $\alpha^{2}\beta=0,$ and $\alpha^{2}\gamma=0.$
Let $\alpha=0,$ then $\delta\neq 0,$ $\beta=0,$ $c=0.$
If $\alpha\neq 0,$ then $\beta=\gamma=0,$ $c=-\alpha^{2}-\beta\gamma+2\alpha^{2}\lambda_{0}.$
\end{proof}

\begin{thm}
$(G_{7}, g, J)$ is the algebraic Schouten soliton associated to the connection $\nabla^{0}$ if and only if\\
(i) $\alpha=\gamma=0,$ $\delta\neq 0,$ $c=0,$\\
(ii) $\alpha\neq 0,$ $\beta=\gamma=0,$ $\alpha+\delta\neq 0,$ $c=-\frac{1}{2}\alpha^{2}+2\alpha^{2}\lambda_{0}.$
\end{thm}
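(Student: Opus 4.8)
The plan is to follow verbatim the computational template established in the preceding theorems of this section. First I would record the symmetrized Ricci operator $\widetilde{\rm Ric}^{0}$ of $(G_{7}, g, J)$, which is available from \cite{Azami}, and then read off the scalar curvature $s^{0}$ from Eq.~(4.12). Substituting these into the defining relation (4.13), rewritten as $D=\widetilde{\rm Ric}^{0}-(s^{0}\lambda_{0}+c){\rm Id}$, gives $D$ explicitly on the basis $\{e_{1},e_{2},e_{3}\}$; the only nondiagonal contribution is expected in the $e_{2}$--$e_{3}$ block, mirroring the Levi-Civita computation for $G_{7}$ in Theorem 3.9.

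Next I would impose the derivation identity $D[e_{i},e_{j}]=[De_{i},e_{j}]+[e_{i},De_{j}]$ on the three defining brackets of $\mathfrak{g}_{7}$. Since $[e_{1},e_{2}]=-[e_{1},e_{3}]$ and every bracket couples the $e_{2}$ and $e_{3}$ components with equal coefficients, expanding and collecting the coefficients of $e_{1},e_{2},e_{3}$ collapses the nine a priori conditions into a short polynomial system in $\alpha,\beta,\gamma,\delta,c,\lambda_{0}$, structurally parallel to the system (4.60) obtained for $(G_{6},g,J)$ under $\nabla^{1}$, whose classification has almost the same shape as the target statement.

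The decisive step is the reduction under the standing constraints $\alpha+\delta\neq 0$ and $\alpha\gamma=0$. I expect two of the resulting equations to combine into a single factored condition of the form $(\alpha+\delta)\bigl(\cdots\bigr)=0$; since $\alpha+\delta\neq 0$, the bracketed factor must vanish, which pins down $c$ and simultaneously forces products such as $\alpha^{2}\beta$ and $\alpha^{2}\gamma$ (or their analogues) to vanish. Branching on $\alpha=0$ versus $\alpha\neq 0$, and invoking $\alpha\gamma=0$ in each branch, then separates the two alternatives: $\alpha=\gamma=0$ with $\delta\neq 0$ and $c=0$, giving case (i); and $\alpha\neq 0$ forcing $\beta=\gamma=0$, giving case (ii) with $c=-\tfrac{1}{2}\alpha^{2}+2\alpha^{2}\lambda_{0}$.

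The main obstacle is bookkeeping rather than conceptual: the $G_{7}$ brackets are the most entangled of the seven families, so care is needed when expanding the derivation condition to determine which linear combinations of the nine scalar equations survive and which are redundant. Once the system is correctly assembled, the case split dictated by $\alpha\gamma=0$ and $\alpha+\delta\neq 0$ is routine and yields exactly the two stated families.
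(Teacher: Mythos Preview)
Your plan is the same computational template the paper uses, and it would succeed, but two of your structural expectations are off and will bite if you don't correct them at the outset.

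First, $\widetilde{\rm Ric}^{0}$ for $(G_{7},g,J)$ is \emph{not} block-diagonal outside the $e_{2}$--$e_{3}$ corner: in the paper's Eq.~(4.62) there is also an $e_{1}$--$e_{3}$ coupling, namely $\pm\tfrac{1}{2}(\alpha\gamma+\tfrac{1}{2}\delta\gamma)$. Even after imposing $\alpha\gamma=0$ this survives as $\tfrac{1}{4}\delta\gamma$, so the Levi-Civita analogy with Theorem~3.8 is misleading. This extra entry feeds directly into the derivation system and is what ultimately kills $\gamma$.

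Second, the reduction does not begin with an $(\alpha+\delta)(\cdots)=0$ factorization. In the paper the first useful combination (of the first and third equations of (4.64)) yields $\gamma(\alpha\gamma+\tfrac{1}{2}\gamma\delta)=0$; together with $\alpha\gamma=0$ and $\alpha+\delta\neq 0$ this forces $\gamma=0$ outright. Only \emph{after} $\gamma=0$ does the system collapse to (4.67), where the $(\alpha+\delta)$ factor appears and gives $c=-\tfrac{1}{2}\alpha^{2}+2\alpha^{2}\lambda_{0}$, and the residual equation $\alpha^{2}\beta=0$ drives the dichotomy (i)/(ii). So your branching on $\alpha$ is right, but it happens one step later than you anticipate, and the elimination of $\gamma$ in the $\alpha=0$ branch is not automatic from $\alpha\gamma=0$; it requires the $e_{1}$--$e_{3}$ term you were not expecting.
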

\begin{proof}
By \cite{Azami}, we have
\begin{align}
\widetilde{{\rm Ric}}^{0}\left(\begin{array}{c}
e_{1}\\
e_{2}\\
e_{3}
\end{array}\right)=\left(\begin{array}{ccc}
-(\alpha^2+\frac{1}{2}\beta\gamma)&0&\frac{1}{2}(\alpha\gamma+\frac{1}{2}\delta\gamma)\\
0&-(\alpha^2+\frac{1}{2}\beta\gamma)&-\frac{1}{2}(\alpha^2+\frac{1}{2}\beta\gamma)\\
-\frac{1}{2}(\alpha\gamma+\frac{1}{2}\delta\gamma)&\frac{1}{2}(\alpha^2+\frac{1}{2}\beta\gamma)&0
\end{array}\right)\left(\begin{array}{c}
e_{1}\\
e_{2}\\
e_{3}
\end{array}\right).
\end{align}
Clearly, $s^{0}=-(2\alpha^2+\beta\gamma).$
It follows that
\begin{align}
\left\{\begin{array}{l}
De_{1}=-(\alpha^2+\frac{1}{2}\beta\gamma-(2\alpha^2+\beta\gamma)\lambda_{0}+c)e_{1}+\frac{1}{2}(\alpha\gamma+\frac{1}{2}\delta\gamma)e_{3},\\
De_{2}=-(\alpha^2+\frac{1}{2}\beta\gamma-(2\alpha^2+\beta\gamma)\lambda_{0}+c)e_{2}-\frac{1}{2}(\alpha^2+\frac{1}{2}\beta\gamma)e_{3},\\
De_{3}=-\frac{1}{2}(\alpha\gamma+\frac{1}{2}\delta\gamma)e_{1}+\frac{1}{2}(\alpha^2+\frac{1}{2}\beta\gamma)e_{2}-(-(2\alpha^2+\beta\gamma)\lambda_{0}+c)e_{3}.\\
\end{array}\right.
\end{align}
A long but straightforward calculation shows that
\begin{align}
\left\{\begin{array}{l}
\alpha(\alpha^2+\frac{1}{2}\beta\gamma-(2\alpha^2+\beta\gamma)\lambda_{0}+c)-\frac{1}{2}(\beta+\gamma)(\alpha\gamma+\frac{1}{2}\gamma\delta)-\frac{1}{2}\alpha(\alpha^2+\frac{1}{2}\beta\gamma)=0,\\
\beta(\alpha^2+\frac{1}{2}\beta\gamma-(2\alpha^2+\beta\gamma)\lambda_{0}+c)-\frac{1}{2}\delta(\alpha\gamma+\frac{1}{2}\gamma\delta)=0,\\
\frac{1}{2}\alpha(\alpha^2+\frac{1}{2}\beta\gamma-2(2\alpha^2+\beta\gamma)\lambda_{0}+2c)-\frac{1}{2}\beta(\alpha\gamma+\frac{1}{2}\gamma\delta)=0,\\
\beta(\alpha^2+\frac{1}{2}\beta\gamma-(2\alpha^2+\beta\gamma)\lambda_{0}+c)=0,\\
\gamma(-(2\alpha^2+\beta\gamma)\lambda_{0}+c)-\frac{1}{2}\delta(\alpha\gamma+\frac{1}{2}\gamma\delta)=0,\\
\frac{1}{2}\delta(\alpha^2+\frac{1}{2}\beta\gamma-2(2\alpha^2+\beta\gamma)\lambda_{0}+2c)+\frac{1}{2}\beta(\alpha\gamma+\frac{1}{2}\gamma\delta)=0,\\
\frac{1}{2}\delta(\alpha^2+\frac{1}{2}\beta\gamma-2(2\alpha^2+\beta\gamma)\lambda_{0}+2c)+\frac{1}{2}(\beta+\gamma)(\alpha\gamma+\frac{1}{2}\gamma\delta)=0.\\
\end{array}\right.
\end{align}
The first and third equations of the system Eq.(4.64) yields
\begin{align}
\gamma(\alpha\gamma+\frac{1}{2}\gamma\delta)=0,
\end{align}
for $\alpha\gamma=0,$ we get
\begin{align}
\left\{\begin{array}{l}
\gamma\delta=0,\\
\gamma^{2}(\alpha+\frac{1}{2}\delta)=0.\\
\end{array}\right.
\end{align}
Let us regard $\gamma=0.$ We can get
\begin{align}
\left\{\begin{array}{l}
\alpha(\alpha^2-4\alpha^2\lambda_{0}+2c)=0,\\
\beta(\alpha^2-2\alpha^2\lambda_{0}+c)=0,\\
\delta(\alpha^2-4\alpha^2\lambda_{0}+2c)=0.\\
\end{array}\right.
\end{align}
Since $\alpha+\delta\neq 0,$ we have $\alpha^2-4\alpha^2\lambda_{0}+2c=0.$ Then
\begin{align}
\alpha^2\beta=0.
\end{align}
We assume that $\alpha=0,$ in this case, we obtain $\delta\neq 0,$ $c=0.$
If $\alpha\neq 0,$ then $\beta=0,$ $c=-\frac{1}{2}\alpha^{2}+2\alpha^{2}\lambda_{0}.$
\end{proof}

\begin{thm}
$(G_{7}, g, J)$ is the algebraic Schouten soliton associated to the connection $\nabla^{1}$ if and only if $\alpha\neq 0,$ $\beta=\gamma=0,$ $\delta=\frac{1}{2}\alpha,$ $c=-\frac{1}{2}\alpha^{2}+2\alpha^{2}\lambda_{0}.$
\end{thm}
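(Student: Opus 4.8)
The plan is to follow the template used throughout Section 4. First I would record from \cite{Azami} the matrix of the symmetrized Ricci operator $\widetilde{{\rm Ric}}^1$ of $(G_7, g, J)$ in the pseudo-orthonormal basis $\{e_1, e_2, e_3\}$. Guided by the $\nabla^0$ computation in the preceding theorem, and by the simpler shapes the $\nabla^1$ operators took for the earlier groups, I expect it to be diagonal in the $e_1$ entry and to retain an antisymmetric off-diagonal coupling between $e_2$ and $e_3$ (and possibly between $e_1$ and $e_3$), with coefficients assembled from $\alpha^2$, $\beta\gamma$, $\alpha\gamma$ and $\delta\gamma$. From its diagonal I would then read off the scalar curvature via $s^1 = \widetilde{\rho}^1(e_1,e_1) + \widetilde{\rho}^1(e_2,e_2) - \widetilde{\rho}^1(e_3,e_3)$.

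Next, using the defining relation Eq.(4.15), namely $\widetilde{{\rm Ric}}^1 = (s^1 \lambda_0 + c){\rm Id} + D$, I would solve for the derivation $D$ by subtracting $(s^1\lambda_0 + c)$ from each diagonal entry of $\widetilde{{\rm Ric}}^1$ and keeping the off-diagonal part unchanged, thereby obtaining explicit formulas for $De_1$, $De_2$ and $De_3$.

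The heart of the argument is to impose the derivation identity Eq.(4.14) on the three structure brackets of $\mathfrak{g}_7$. Expanding $D[e_i,e_j]$ and $[De_i, e_j] + [e_i, De_j]$ with the given structure constants and matching the coefficients of $e_1$, $e_2$ and $e_3$ produces an overdetermined polynomial system in $\alpha, \beta, \gamma, \delta, c, \lambda_0$. I would then feed in the standing constraints $\alpha + \delta \neq 0$ and $\alpha\gamma = 0$. As in the preceding theorem, I anticipate that the system first forces $\gamma = 0$, since the alternative $\gamma \neq 0$ (which by $\alpha\gamma = 0$ forces $\alpha = 0$) leads to an incompatible relation; with $\gamma = 0$ the off-diagonal coupling involving $\gamma$ disappears and the system collapses to a handful of scalar equations in $\alpha, \beta, \delta, c$.

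The hard part — and what distinguishes this theorem from its $\nabla^0$ counterpart, in which $\alpha = 0$ was admissible — is the final case split. I expect the surviving off-diagonal term (an $e_2$-$e_3$ coupling that does not vanish with $\gamma$) to interact with the bracket $[e_2,e_3] = \delta e_2 + \delta e_3$ in such a way that the coefficient equations simultaneously force $\beta = 0$, exclude $\alpha = 0$, and pin down the new relation $\delta = \frac{1}{2}\alpha$. Confirming sufficiency — that with $\beta = \gamma = 0$ and $\delta = \frac{1}{2}\alpha$ every coefficient equation is genuinely satisfied when $c = -\frac{1}{2}\alpha^2 + 2\alpha^2\lambda_0$ — will be the delicate step, since it hinges on exactly how the factor $\frac{1}{2}$ in $\delta$ balances the residual off-diagonal contribution left behind by the symmetrization of $\rho^1$.
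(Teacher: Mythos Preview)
Your overall strategy---record $\widetilde{{\rm Ric}}^1$, compute $s^1$, solve for $D$, impose the derivation identity on the three brackets, then reduce using $\alpha\gamma=0$ and $\alpha+\delta\neq0$---is exactly the paper's approach. However, your guess about the shape of $\widetilde{{\rm Ric}}^1$ is off in a way that would derail the case analysis if not corrected.

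The operator is \emph{not} diagonal in the $e_1$ row, and its entries are not built only from $\alpha^2,\beta\gamma,\alpha\gamma,\delta\gamma$. In fact there is a symmetric $(e_1,e_2)$-coupling $\tfrac12(\beta\delta-\alpha\beta)$, an antisymmetric $(e_1,e_3)$-coupling $\mp\beta(\alpha+\delta)$, and the $(e_2,e_3)$-coupling is $\mp\tfrac12(\beta\gamma+\alpha\delta+2\delta^2)$; the diagonal also picks up a $\beta^2$ term, so $s^1=-(2\alpha^2+\beta^2+\beta\gamma)$. Two consequences: first, setting $\gamma=0$ does not make the system ``collapse to a handful of scalar equations''---all the $\beta$-dependent off-diagonal terms persist, and the paper carries nine equations through that reduction before specializing further to $\beta=0$. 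Second, the mechanism that pins down $\delta=\tfrac12\alpha$ is precisely the $\alpha\delta+2\delta^2$ piece of the $(e_2,e_3)$-entry (which survives even when $\beta=\gamma=0$); combining the resulting scalar equations yields $(\alpha+\delta)(2\delta-\alpha)=0$. So your intuition that a surviving $e_2$--$e_3$ coupling forces the extra relation is correct, but the coupling is $\alpha\delta+2\delta^2$, not something $\gamma$-related, and you will need to handle the additional $\beta$-dependent off-diagonals to justify $\beta=0$.
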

\begin{proof}
From \cite{Azami}, we get
\begin{align}
\widetilde{{\rm Ric}}^{1}\left(\begin{array}{c}
e_{1}\\
e_{2}\\
e_{3}
\end{array}\right)=\left(\begin{array}{ccc}
-\alpha^{2}&\frac{1}{2}(\beta\delta-\alpha\beta)&-\beta(\alpha+\delta)\\
\frac{1}{2}(\beta\delta-\alpha\beta)&-(\alpha^{2}+\beta^{2}+\beta\gamma)&-\frac{1}{2}(\beta\gamma+\alpha\delta+2\delta^{2})\\
\beta(\alpha+\delta)&\frac{1}{2}(\beta\gamma+\alpha\delta+2\delta^{2})&0
\end{array}\right)\left(\begin{array}{c}
e_{1}\\
e_{2}\\
e_{3}
\end{array}\right).
\end{align}
Of course $s^{1}=-(2\alpha^{2}+\beta^{2}+\beta\gamma).$
It follows that
\begin{align}
\left\{\begin{array}{l}
De_{1}=-(\alpha^{2}-(2\alpha^{2}+\beta^{2}+\beta\gamma)\lambda_{0}+c)e_{1}+\frac{1}{2}(\beta\delta-\alpha\beta)e_{2}-\beta(\alpha+\delta)e_{3},\\
De_{2}=\frac{1}{2}(\beta\delta-\alpha\beta)e_{1}-(\alpha^{2}+\beta^{2}+\beta\gamma-(2\alpha^{2}+\beta^{2}+\beta\gamma)\lambda_{0}+c)e_{2}-\frac{1}{2}(\beta\gamma+\alpha\delta+2\delta^{2})e_{3},\\
De_{3}=\beta(\alpha+\delta)e_{1}+\frac{1}{2}(\beta\gamma+\alpha\delta+2\delta^{2})e_{2}-(-(2\alpha^{2}+\beta^{2}+\beta\gamma)\lambda_{0}+c)e_{3}.\\
\end{array}\right.
\end{align}
Therefore Equation (4.14) now becomes
\begin{align}
\left\{\begin{array}{l}
\alpha(\alpha^{2}+\beta^{2}+\beta\gamma-(2\alpha^{2}+\beta^{2}+\beta\gamma)\lambda_{0}+c)+(\beta^{2}+\beta\gamma)(\alpha+\delta)+\frac{1}{2}\beta(\beta\delta-\alpha\beta)\\
-\frac{1}{2}\alpha(\beta\gamma+\alpha\delta+2\delta^2)=0,\\
\beta(\frac{1}{2}\alpha^{2}-(2\alpha^{2}+\beta^{2}+\beta\gamma)\lambda_{0}+c+\frac{3}{2}\alpha\delta+\delta^{2})=0,\\
\beta(2\alpha^{2}+\beta^{2}+\beta\gamma-(2\alpha^{2}+\beta^{2}+\beta\gamma)\lambda_{0}+c)-\beta(\beta\gamma+\alpha\delta+2\delta^{2})+\beta(\alpha+\delta)(\delta-\alpha)=0,\\
\alpha(-(2\alpha^{2}+\beta^{2}+\beta\gamma)\lambda_{0}+c)+\frac{1}{2}\alpha(\beta\gamma+\alpha\delta+2\delta^{2})+\frac{1}{2}(\beta-\gamma)(\beta\delta-\alpha\beta)+\beta^{2}(\alpha+\delta)=0,\\
-\beta(\beta^{2}+\beta\gamma+(2\alpha^{2}+\beta^{2}+\beta\gamma)\lambda_{0}-c)+\frac{1}{2}(\alpha-\delta)(\beta\delta-\alpha\beta)+\beta(\beta\gamma+\alpha\delta+2\delta^{2})=0,\\
\beta(-\frac{1}{2}\alpha\delta-\frac{1}{2}\delta^{2}-(2\alpha^{2}+\beta^{2}+\beta\gamma)\lambda_{0}+c)=0,\\
\gamma(\beta^{2}+\beta\gamma-(2\alpha^{2}+\beta^{2}+\beta\gamma)\lambda_{0}+c)-\frac{1}{2}(\alpha-\delta)(\beta\delta-\alpha\beta)+\beta(\alpha+\delta)(\delta-\alpha)=0,\\
\delta(-(2\alpha^{2}+\beta^{2}+\beta\gamma)\lambda_{0}+c)-\frac{1}{2}(\beta-\gamma)(\beta\delta-\alpha\beta)+\frac{1}{2}\delta(\beta\gamma+\alpha\delta+2\delta^{2})-\beta^{2}(\alpha+\delta)=0,\\
-\delta(\alpha^{2}+\beta^{2}+\beta\gamma-(2\alpha^{2}+\beta^{2}+\beta\gamma)\lambda_{0}+c)+\frac{1}{2}\beta(\beta\delta-\alpha\beta)+\frac{1}{2}\delta(\beta\gamma+\alpha\delta+2\delta^{2})\\
+(\beta^{2}+\beta\gamma)(\alpha+\delta)=0.\\
\end{array}\right.
\end{align}
Throughout the proof recall that $\alpha+\delta\neq 0$ and $\alpha\gamma=0.$
Assume first that $\alpha\neq 0,$ $\gamma=0.$ In this case,
\begin{align}
\left\{\begin{array}{l}
\alpha(\alpha^{2}+\beta^{2}-(2\alpha^{2}+\beta^{2})\lambda_{0}+c)+\beta^{2}(\alpha+\delta)+\frac{1}{2}\beta(\beta\delta-\alpha\beta)-\frac{1}{2}\alpha(\alpha\delta+2\delta^2)=0,\\
\beta(\frac{1}{2}\alpha^{2}-(2\alpha^{2}+\beta^{2})\lambda_{0}+c+\frac{3}{2}\alpha\delta+\delta^{2})=0,\\
\beta(2\alpha^{2}+\beta^{2}-(2\alpha^{2}+\beta^{2})\lambda_{0}+c)-\beta(\alpha\delta+2\delta^{2})+\beta(\alpha+\delta)(\delta-\alpha)=0,\\
\alpha(-(2\alpha^{2}+\beta^{2})\lambda_{0}+c)+\frac{1}{2}\alpha(\alpha\delta+2\delta^{2})+\frac{1}{2}\beta(\beta\delta-\alpha\beta)+\beta^{2}(\alpha+\delta)=0,\\
-\beta(\beta^{2}+(2\alpha^{2}+\beta^{2})\lambda_{0}-c)+\frac{1}{2}(\alpha-\delta)(\beta\delta-\alpha\beta)+\beta(\alpha\delta+2\delta^{2})=0,\\
\beta(-\frac{1}{2}\alpha\delta-\frac{1}{2}\delta^{2}-(2\alpha^{2}+\beta^{2})\lambda_{0}+c)=0,\\
-\frac{1}{2}(\alpha-\delta)(\beta\delta-\alpha\beta)+\beta(\alpha+\delta)(\delta-\alpha)=0,\\
\delta(-(2\alpha^{2}+\beta^{2})\lambda_{0}+c)-\frac{1}{2}\beta(\beta\delta-\alpha\beta)+\frac{1}{2}\delta(\alpha\delta+2\delta^{2})-\beta^{2}(\alpha+\delta)=0,\\
-\delta(\alpha^{2}+\beta^{2}-(2\alpha^{2}+\beta^{2})\lambda_{0}+c)+\frac{1}{2}\beta(\beta\delta-\alpha\beta)+\frac{1}{2}\delta(\alpha\delta+2\delta^{2})+\beta^{2}(\alpha+\delta)=0.\\
\end{array}\right.
\end{align}
Next suppose that $\beta=0,$
\begin{align}
\left\{\begin{array}{l}
\alpha(\alpha^{2}-2\alpha^{2}\lambda_{0}+c)-\frac{1}{2}\alpha(\alpha\delta+2\delta^2)=0,\\
\alpha(-2\alpha^{2}\lambda_{0}+c)+\frac{1}{2}\alpha(\alpha\delta+2\delta^{2})=0,\\
\delta(-2\alpha^{2}\lambda_{0}+c)+\frac{1}{2}\delta(\alpha\delta+2\delta^{2})=0,\\
-\delta(\alpha^{2}-2\alpha^{2}\lambda_{0}+c)+\frac{1}{2}\delta(\alpha\delta+2\delta^{2})=0.\\
\end{array}\right.
\end{align}
Then we get
\begin{align}
(\alpha+\delta)(2\delta-\alpha)=0,
\end{align}
that is, $\delta=\frac{1}{2}\alpha,$ $c=-\frac{1}{2}\alpha^{2}+2\alpha^{2}\lambda_{0}.$
\end{proof}

\vskip 1 true cm

\section{Conclusion}
The main work of this paper is to investigate algebraic Schouten solitons associated to Levi-Civita connections, canonical connections and Kobayashi-Nomizu connections, and classify algebraic Schouten solitons associated to Levi-Civita connections, canonical connections and Kobayashi-Nomizu connections on three-dimensional Lorentzian Lie groups with the product structure.\\

\vskip 1 true cm

\section{Declarations}
Ethics approval and consent to participate No applicable.\\

Consent for publication No applicable.\\

Availability of data and material The authors confirm that the data supporting the findings of this study are available within the article.\\

Competing interests The authors declare no conflict of interest.\\

Funding This research was funded by National Natural Science Foundation of China: No.11771070.\\

Authors' contributions S.L. studies conceptualization and writing (review and editing) the manuscript, Y.W. funding acquisition and project administration.\\

%-----------------------------------------------------------------------------
%-----------------------------------------------------------------------------

\bigskip
\bigskip

\noindent {\footnotesize {\it S. Liu} \\
{School of Mathematics and Statistics, Northeast Normal University, Changchun 130024, China}\\
{Email: liusy719@nenu.edu.cn}

\noindent {\footnotesize {\it Y. Wang} \\
{School of Mathematics and Statistics, Northeast Normal University, Changchun 130024, China}\\
{Email: wangy581@nenu.edu.cn}

\end{document}